\newcommand{\norm}[1]{\left\Vert#1\right\Vert}
\newcommand{\order}[1]{\mathcal{O}\left(#1\right)}
\newcommand{\abs}[1]{\left|#1\right|}
\newcommand{\eps}{\varepsilon}
\newcommand{\Dt}{{\Delta t}}
\newcommand{\R}{\mathbb R}
\newcommand{\Z}{\mathbb{Z}}
\newcommand{\N}{\mathbb N}
\newcommand{\T}{\mathbb{T}}
\newcommand{\Oh}{\mathcal O}
\newcommand{\dott}{\, \cdot\,}
\newcommand{\seq}[1]{\left\{#1\right\}}
\newcommand{\Hil}{\mathbb{H}_{\rm per}}
\newcommand{\dH}{\mathbb H}
\newcommand{\dG}{\mathbb G}
\newcommand{\Dx}{{\Delta x}}
\newcommand{\Sm}{S^{-}}
\newcommand{\Sp}{S^{+}}
\newcommand{\Dp}{D_{+}}
\newcommand{\Dm}{D_{-}}
\newcommand{\Dpm}{D_{\pm}}
\newcommand{\Dtp}{D^t_{+}}
\newcommand{\ave}[1]{\tilde{#1}}
\DeclareMathOperator*{\Lip}{Lip}
\renewcommand{\Im}{{\rm Im}}
\newcommand{\test}{\varphi}
\newtheorem{theorem}{Theorem}[section]
\newtheorem{lemma}[theorem]{Lemma}
\newtheorem{remark}[theorem]{Remark}
\numberwithin{equation}{section}     
\numberwithin{figure}{section}
\numberwithin{table}{section}
\newcounter{asnr}
\ifnum\value{asnr}=0 \stepcounter{asnr} 
\newcounter{defnr}
\ifnum\value{defnr}=0 \stepcounter{defnr} 
\begin{document}
\title[FD scheme for BO~equation]{Convergence of Finite Difference schemes for \\ the Benjamin--Ono
  equation}

\author[Dutta]{R. Dutta} \address[Rajib Dutta]{\newline Department of Mathematics, University
  of Oslo, P.O.\ Box 1053, Blindern, NO--0316 Oslo, Norway}
\email[]{\href{rajib.dutta@cma.uio.no}{rajib.dutta@cma.uio.no}}

\author[Holden]{H. Holden} \address[Helge Holden]{\newline Department
  of Mathematical Sciences, Norwegian University of Science and
  Technology, NO--7491 Trondheim, Norway,\newline {\rm and} \newline
  Department of Mathematics,
  University of Oslo, P.O.\ Box 1053, Blindern, NO--0316 Oslo, Norway}
\email[]{\href{holden@math.ntnu.no}{holden@math.ntnu.no}}
\urladdr{\href{http://www.math.ntnu.no/~holden}{www.math.ntnu.no/\~{}holden}}

\author[Koley]{U. Koley} \address[Ujjwal Koley] {\newline Tata
  Institute of Fundamental Research Centre, Centre For
  Applicable Mathematics, \newline Post Bag No. 6503, GKVK Post
  Office, Sharada Nagar, Chikkabommasandra, \newline
  Bangalore-560065, India.}
\email[]{\href{ujjwal@math.tifrbng.res.in}{ujjwal@math.tifrbng.res.in}}

\author[Risebro]{N. H. Risebro} \address[Nils Henrik Risebro]{\newline
  Department of Mathematics,
  University of Oslo, P.O.\ Box 1053, Blindern, NO--0316 Oslo, Norway}
\email[]{\href{nilshr@math.uio.no}{nilshr@math.uio.no}}
\urladdr{\href{http://www.mn.uio.no/math/english/people/aca/nilshr/}%
  {http://www.mn.uio.no/math/english/people/aca/nilshr/}}

\date{\today}

\subjclass[2010]{Primary: 35Q53, 65M06; Secondary: 35Q51, 65M12,
  65M15}

\keywords{Benjamin--Ono equation; Hilbert Transform; Finite
difference scheme; Crank--Nicolson method; Convergence}
\thanks{Supported in part by the Research Council of Norway and the
  Alexander von Humboldt Foundation.}

\begin{abstract}
In this paper, we analyze finite difference schemes for Benjamin--Ono
equation, $u_t= u u_x + H u_{xx}$, where $H$ denotes the Hilbert
transform. Both the decaying case on the full line
and the periodic case are considered. If the initial data are sufficiently regular, 
fully discrete finite difference schemes shown to converge to a classical solution.
Finally, the convergence is illustrated by several examples.
\end{abstract}

\maketitle


\section{Introduction}
\label{sec:intro}
This paper considers a fully discrete
finite difference scheme for the Benjamin--Ono (BO) equation.  The
BO equation models the evolution of weakly nonlinear
internal long waves. It has been derived by Benjamin \cite{benjamin}
and Ono \cite{ono} as an approximate model for long-crested
unidirectional waves at the interface of a two-layer system of
incompressible inviscid fluids, one being infinitely deep. In
non-dimensional variables, the initial value problem associated with
the BO equation reads
\begin{equation}
  \begin{cases}
    \label{eq:main}
    u_t= uu_x + H u_{xx}, \quad x\in\R, \ 0\le t \le T,& \\
    u|_{t=0}=u_0,&
  \end{cases}
\end{equation}
where $H$ denotes the Hilbert transform defined by
the principle value integral
\begin{equation*}
  H u(x) := \mathrm{P.V.} \, \frac{1}{\pi} \int_{\R} \frac{u(x-y)}{y} \,dy.
\end{equation*}
The BO equation is, at least formally, completely
integrable \cite{ablowitz} and thus possesses an infinite number of
conservation laws.  For example, the momentum and the energy,
given by
\begin{align*}
  M(u):= \int u^2 \,dx, \, \, \text{and} \, \, E(u):= \frac{1}{2} \int
  \abs{D_x^{1/2} u}^2 \,dx + \frac{1}{6} \int u^3 \,dx,
\end{align*}
are conserved quantities for solutions of \eqref{eq:main}.

We also consider the corresponding $2L$-periodic problem
\begin{equation}
  \begin{cases}
    \label{eq:main_per}
    u_t= uu_x + \Hil u_{xx}, \quad &x\in\T, \ 0\le t \le T, \\
    u|_{t=0}=u_0,& x\in\T 
  \end{cases}
\end{equation}
where $\mathbb{T}:= \R/2 L \Z$.  The periodic
Hilbert transform is defined by the principle value integral
\begin{align*}
  \Hil u(x) = \mathrm{P.V.} \frac{1}{2L} \int_{-L}^{L}
  \cot \Bigl(\frac{\pi}{2L} y\Bigr) u(x-y)\,dy.
\end{align*}
The initial value problem \eqref{eq:main} has been extensively studied
in recent years. Well-posedness of \eqref{eq:main} in $H^s(\R)$,
for $s > 3$ was proved by Iorio \cite{iorio}  using purely
hyperbolic energy methods.  Then, Ponce \cite{ponce} derived a local
smoothing effect associated to the dispersive part of the equation,
which combined with compactness methods, enabled him to prove
well-posedness also for  $s = 3$.

By combining a complex version of the Cole--Hopf transform with
Strichartz estimates, Tao \cite{Tao:2004} was able to show
well-posedness of the Cauchy problem \eqref{eq:main} in $H^1(\R)$.
This well-posedness was extended to $H^s(\R)$ for $s>1$ by Burq and
Planchon \cite{burq} and for $s\ge 0$ by Ionescu and Kenig
\cite{ionescu}.
In the periodic setting, Molinet \cite{molinet3a}
proved well-posedness in $H^s (\mathbb{T})$ for  $s \ge
0$. For operator splitting methods applied to the BO equation, see \cite{DuttaHoldenKoleyRisebro:2015}.

In this paper, we define a numerical scheme for both \eqref{eq:main}
and \eqref{eq:main_per}, with the aim to develop a \textit{convergent}
finite difference scheme.  While there are several numerical methods
for the BO equation
which perform well in practice, indeed better than the one presented here, see \cite{BoydXu} for a recent comparison of different
numerical methods, we emphasize that we here \textit{prove} the convergence of our
proposed scheme. Having said this, there are results concerning error estimates 
for the BO equation  in \cite{vasu:1998, Pelloni:2001, DengMa:2009}.
However, error estimate analysis a priori assumes existence of solutions of the underlying equation, while
our convergence analysis, as a by-product, can be viewed as a constructive proof for the existence
of solutions of the BO equation \eqref{eq:main}.
It is worth mentioning that the scheme under consideration in this paper is similar to the scheme
analyzed in \cite{vasu:1998}, the only difference being that a different discretization of Hilbert transform is
introduced in this paper.


We analyze the  fully discrete Crank--Nicolson difference scheme 
\begin{equation}
  \label{eq:hilbertrealline}
  u^{n+1}_j = u^n_j + \Dt\, \ave{u}^{n+1/2}_j D u^{n+1/2}_j + \Dt\, \dH\left(
    \Dp \Dm u^{n+1/2} \right)_j,  
  \quad n\in\N_0, \, j\in \Z,
\end{equation}
where $\Dx, \Dt$ are discretization parameters, $u^n_j\approx u(j\Dx, n\Dt)$ and $u^{n+1/2}=(u^n+u^{n+1})/2$.  Furthermore, $D$ and $\Dpm$ denote
symmetric and forward/backward (spatial) finite differences,
respectively, $\dH$ denotes a discrete Hilbert transform operator, and $\ave{u}$ denotes a spatial average.  We show
(Theorem~\ref{thm:H3convergence}) that for initial data $u_0\in
H^2(\R)$ there exists a finite time $T$, depending only on the
$H^2(\R)$ norm of the initial data such that for $t\le T$, the
difference approximation \eqref{eq:BO_fd} converges uniformly in
$C(\R\times [0,T])$ to the unique solution of the BO
equation \eqref{eq:main} as $\Dx\to 0$ with $\Dt=\Oh(\Dx)$. 
Furthermore, following \cite[Theorem 3.2]{vasu:1998}, a second-order
error estimate in both time and space for smooth solutions can be obtained by our numerical method.
  
The rest of the paper is organized as follows: In
Section~\ref{sec:scheme}, we present necessary notations to introduce
the Crank--Nicolson
 scheme and present the convergence analysis in the full line case, in
 Section~\ref{sec:periodic} we present the periodic Hilbert transform
 and outline the proofs in the periodic setting, and finally in
Section~\ref{sec:numerics}, we test our numerical scheme and provide
some numerical results.

\section{The finite difference scheme}
\label{sec:scheme}
Throughout this paper, we use the letters $C$, $K$ etc.~to denote
various constants which may change from line to line. 
We start by introducing the necessary notation.  Derivatives will be
approximated by finite differences, and the basic quantities are as
follows.  For any function $p:\R\to \R$, we set
$$
D_{\pm} p(x)=\pm \frac1{\Dx}\big(p(x\pm \Dx)-p(x)\big), \ \text{and}\
D = \frac{1}{2}\left(\Dp + \Dm \right)
$$
for some (small) positive number $\Dx$. If we introduce the averages
$$
\ave{p}(x) := \frac{1}{3}\left(p(x+\Dx)+p(x)+ p(x-\Dx)\right), \,\, \bar{p}(x) := \frac{1}{2}\left(p(x+\Dx)+ p(x-\Dx)\right)
$$
and the shift operator
$$
S^\pm p(x)=p(x\pm\Dx),
$$
we find that
\begin{align*}
  D(pq)&= \bar{p} Dq+\bar{q} Dp, \\
  \Dpm (pq)&=S^\pm p \Dpm q+q\Dpm p=S^\pm q \Dpm p+p\Dpm q.
\end{align*}

We discretize the real axis using $\Dx$ and set $x_j = j \Dx$ for $j
\in \Z$. For a given function $p$, we define $p_j = p(x_j)$.  We will
consider functions in $\ell^2$ with the usual inner product and norm
\begin{equation*}
  \langle p, q\rangle= \Dx\sum_{j\in \Z} p_j q_j, \quad \norm{p}= \norm{p}_2=\langle p,p\rangle^{1/2}, \qquad
  p,q\in \ell^2. 
\end{equation*}  
Moreover, we define $h^2$-norm of a grid function as
\begin{align*}
\norm{p}_{h^2} := \Bigl(\norm{p}^2 + \norm{\Dp p}^2 + \norm{\Dp \Dm p}^2\Bigr)^{1/2}.
\end{align*} 
 Observe that
\begin{equation*}
  \norm{p}_\infty := \sup_{j\in\Z}\abs{p_j}\le \frac1{\Dx^{1/2}  } \norm{p}.
\end{equation*} 
In the periodic case, let $N$ be a given \emph{odd} natural number.
We divide the periodicity interval $[-L,L]$ into $N$ sub-intervals
$[x_j, x_{j+1}]$ using $ \Dx=\frac{2L}{N}$, where
\begin{equation*}
  x_j=-L+j\Dx, \quad \text{for} \quad j=0,1,2,.....,N.
\end{equation*}
In the periodic case the sum over $\Z$ is replaced by a
finite sum $j=0,\dots,N$. 
The various difference operators enjoy the following properties:
\begin{equation*}
  \langle p, D_{\pm}q\rangle=-\langle D_{\mp}p,q\rangle, \quad \langle p,Dq\rangle=-\langle Dp,q\rangle, \qquad p,q\in \ell^2.
\end{equation*}  
Furthermore, using Leibniz rules, the following identities can be readily verified:
\begin{subequations}
\begin{align}
\label{imp1}
\langle D(pq), q \rangle &= \frac{\Dx}{2} \langle \Dp p \,D q, q \rangle + \frac12 \langle S^{-} q\, Dp, q \rangle, \\
  \label{imp2}
 \Dp \Dm (pq)&=\Dm p \Dp q + \Dp \Dm q  S^- p + \Dp p \Dp q +  q \Dp \Dm p.
\end{align}
\end{subequations}

We also need to discretize in the time direction. Introduce (a small)
time step $\Dt>0$, and use the notation
\begin{equation*}
  \Dtp p(t)=\frac1{\Dt}\big(p(t+\Dt)-p(t)\big),
\end{equation*}
for any function $p\colon[0,T]\to \R$. Write $t_n=n\Dt$ for
$n\in\N_0=\N\cup\{0\}$. A fully discrete grid function is a function
$u_\Dx\colon \Dt\, \N_0 \to \R^\Z$, and we write
$u_\Dx(x_j,t_n)=u^n_j$. (A CFL-condition will enforce a relationship
between $\Dx$ and $\Dt$, and hence we only use $\Dx$ in the notation.)

Next we present a lemma, which essentially gives
a relation between discrete and continuous Sobolev norms. Since we shall use this lemma 
frequently, for the sake of completeness, we present a proof of this lemma in the full line case.
\begin{lemma}
  \label{lemmaimp}
  There exists a constant $C$ such that for all $u \in H^2(\R)$
  \begin{align*}
    \norm{u_\Dx}_{h^2} \le C\, \norm{u}_{H^2},
  \end{align*}
  where we identify $u_\Dx$ with the discrete evaluation
  $\seq{u(x_j)}_{j}$.
\end{lemma}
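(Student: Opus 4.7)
The plan is to handle the three terms in $\norm{u_\Dx}_{h^2}^2 = \norm{u_\Dx}^2 + \norm{\Dp u_\Dx}^2 + \norm{\Dp\Dm u_\Dx}^2$ one at a time, using only the fundamental theorem of calculus and Cauchy--Schwarz. By density of $C_c^\infty(\R)$ in $H^2(\R)$, it suffices to establish the estimates for smooth $u$; the general case follows by approximation, since both sides are continuous in $u$ with respect to the $H^2$ topology (the right via Sobolev embedding $H^2\hookrightarrow C^1$, which makes pointwise evaluation continuous).

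For the first-difference term, I would write
\[
D_+ u(x_j) = \frac{1}{\Dx}\int_{x_j}^{x_{j+1}} u'(y)\,dy,
\]
and apply Cauchy--Schwarz to get $|D_+ u(x_j)|^2 \le \Dx^{-1}\int_{x_j}^{x_{j+1}} |u'(y)|^2\,dy$. Multiplying by $\Dx$ and summing over $j\in\Z$ telescopes the integrals to $\norm{u'}_{L^2}^2$. For the $L^2$-type term, use for any $y\in[x_j,x_{j+1}]$ the identity $u(x_j) = u(y) - \int_{x_j}^y u'(s)\,ds$, so $|u(x_j)|^2 \le 2|u(y)|^2 + 2\Dx\int_{x_j}^{x_{j+1}} |u'(s)|^2\,ds$; integrating in $y$ over $[x_j,x_{j+1}]$, dividing by $\Dx$, and summing gives $\norm{u_\Dx}^2 \le 2\norm{u}_{L^2}^2 + 2\Dx^2\norm{u'}_{L^2}^2 \le C\,\norm{u}_{H^1}^2$ for $\Dx\le 1$.

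The slightly more delicate step is the second-difference term. Using FTC twice, I would write
\[
u(x_{j+1}) - 2u(x_j) + u(x_{j-1}) = \int_0^{\Dx}\!\int_{x_j - \Dx + s}^{x_j + s} u''(t)\,dt\,ds,
\]
since the inner difference equals $u'(x_j+s) - u'(x_j - \Dx + s)$. Two applications of Cauchy--Schwarz (one in $s$, one in $t$) yield
\[
|u(x_{j+1}) - 2u(x_j) + u(x_{j-1})|^2 \le \Dx^3\int_{x_{j-1}}^{x_{j+1}} |u''(t)|^2\,dt,
\]
after enlarging the inner interval (of length $\Dx$) to the fixed window $[x_{j-1},x_{j+1}]$. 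Dividing by $\Dx^4$ gives $|\Dp\Dm u(x_j)|^2 \le \Dx^{-1}\int_{x_{j-1}}^{x_{j+1}} |u''(t)|^2\,dt$; multiplying by $\Dx$ and summing over $j$ gives $2\norm{u''}_{L^2}^2$, since each sub-interval $[x_{j-1},x_j]$ is counted at most twice.

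Combining the three estimates yields $\norm{u_\Dx}_{h^2}^2 \le C\,(\norm{u}_{L^2}^2 + \norm{u'}_{L^2}^2 + \norm{u''}_{L^2}^2) = C\,\norm{u}_{H^2}^2$, which is the claim. The main obstacle is just keeping the book-keeping straight on the double integral for the second difference; once the integral representation is in place, everything reduces to a two-fold Cauchy--Schwarz and a telescoping sum.
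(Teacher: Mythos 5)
Your proof is correct and follows essentially the same route as the paper's: represent each difference quotient as a cell average of the corresponding derivative via the fundamental theorem of calculus, apply Cauchy--Schwarz on each cell, and sum. The only cosmetic difference is that you handle the second difference with a single double-integral representation, whereas the paper converts $\Dp\Dm u$ to $\Dm\partial_x u$ and then to $\partial_x^2 u$ in two successive one-derivative steps; your treatment of the zeroth-order term (bounding $\norm{u_\Dx}$ by $\norm{u}_{H^1}$ rather than by $\norm{\partial_x^2 u}$ alone) is in fact the more careful version of what the paper intends.
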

\begin{proof}
To begin with, observe that the discrete operator
$\Dp \Dm$ commutes with the continuous operator $\partial_x$. A simple use of the H\"{o}lder estimate reveals that
\begin{align*}
\norm{\Dp \Dm u}^2_{L^2(\R)}  &= \Dx \sum_j  \left(\frac{1}{\Dx} \left(\Dm u(x_{j+1}) - \Dm u(x_j)\right)\right)^2 \\
&= \Dx \sum_j \left(\int_{x_j}^{x_{j+1}} \frac{1}{\Dx} \partial_x \Dm u(x)\,dx\right)^2 \\
&\le \Dx \sum_j  \left(\norm{\frac{1}{\Dx}}_{L^2([x_j,x_{j+1}])} \norm{\partial_x \Dm u(x)}_{L^2([x_j,x_{j+1}])} \right)^2 \\
&= \norm{\Dm \partial_x u}^2_{L^2(\R)}.
\end{align*}
Similarly, we can show that
\begin{align*}
 \norm{\Dm \partial_x u}_{L^2(\R)} \le  \norm{\partial^2_x u}_{L^2(\R)}.
\end{align*} 
Furthermore, similar arguments can be used to show
\begin{align*}
\norm{\Dp u}_{L^2(\R)}  \le  \norm{\partial^2_x u}_{L^2(\R)}, \,\, \text{and} \,\, 
\norm{u}_{L^2(\R)}  \le  \norm{\partial^2_x u}_{L^2(\R)}.
\end{align*}
Combining above results, the result is proved.
\end{proof}

We will now provide details for the discrete Hilbert transform,
which is different in full line and the periodic cases. 

Here we concentrate on the full line case, both regarding the Hilbert
transform and the difference scheme. The periodic case is similar, and
we will only provide detailed proofs where the differences are
sufficiently important. Thus for the moment, we consider the
non-periodic case, while the results in the periodic case are outlined
in Section~\ref{sec:periodic}.

\subsection*{The discrete Hilbert transform on $\R$}
Recall that the continuous Hilbert transform $H$ on $\R$ is defined by
\begin{equation}\label{eq:HIlbert}
\begin{aligned}
  H(u)(x) &= \mathrm{P.V.} \, \frac{1}{\pi} \int_{\R} \frac{u(y)}{x-y}
  \,dy\\
  &=\lim_{\eps\downarrow 0} \frac{1}{\pi}\int_\eps^\infty
  \frac{1}{y}\left(u(x-y)-u(x+y)\right)\,dy.
\end{aligned}
\end{equation}
As a strategy to discretize the continuous Hilbert transform, we first consider \emph{even} $j$, and write $(H u)(x_j) := H(u)_j$
as
\begin{align*}
H(u)_j =\mathrm{P.V.} \frac{1}{\pi} \int_{\R} \frac{u(y)}{x_j-y} \,dy.
\end{align*}
This can be rewritten as
\begin{align*}
H(u)_j=\frac{1}{\pi}\sum_{k=\,\text{even}} \int_{x_k}^{x_{k+2}}
  \frac{u(y)}{x_j-y} \,dy.
\end{align*}
Next, we apply the midpoint rule on each of these integrals in the sum, to
obtain the following quadrature formula
\begin{align*}
H(u)_j\approx \frac{2}{\pi}\sum_{k =\,\text{odd}} \frac{u_k}{j-k} .
\end{align*}
Similar arguments can be repeated almost \emph{verbatim} to deal with \emph{odd} $j$, to conclude 
\begin{equation*}
H(u)_j\approx \frac{2}{\pi}\sum_{k=\,\text{even}} \frac{u_k}{j-k}.
\end{equation*}
Therefore, combining the above results, we can define the discrete
Hilbert transform $\dH$ of a function $u$ as
\begin{align}
  \label{eq:hil_full}
  \dH(u_\Dx)_j&=\frac1{\pi} \sum_{k\ne j}\frac{u_k
    \left(1-(-1)^{j-k}\right)}{j-k}\qquad j\in \Z\\
  &=\frac{1}{\pi} \sum_{k=1}^\infty \frac{1}{k}\left(u_{j-k} -
    u_{j+k}\right)\left(1-(-1)^k\right)\notag\\
  &=\frac{1}{\pi} \sum_{k=0}^\infty \int_{x_{2k}}^{x_{2k+2}} 
  \frac{1}{x_{2k+1}} \left(u(x_j-x_{2k+1})-u(x_j+x_{2k+1})\right)\,dy.\notag
\end{align}
We now list some useful properties of \eqref{eq:hil_full} in the
following lemma.
\begin{lemma}
  \label{lemma:imp}
  The discrete Hilbert transform $\dH$ on $\R$ defined by
  \eqref{eq:hil_full} is a linear operator with the following
  properties: \\
(i)  (Skew symmetric) For any two grid functions $u$ and $v$, the
    discrete Hilbert transform satisfies
    \begin{align*}
      \langle \dH u, v \rangle = - \langle u, \dH v \rangle.
    \end{align*}
(ii) (Translation invariant) The discrete Hilbert transform
    commutes with discrete derivatives, i.e., 
    \begin{align*}
    \dH\left( \Dpm u \right) = \Dpm \dH(u).
    \end{align*}
 (iii) (Norm preservation) Finally, it also preserves the discrete
    $L^2$-norm
    \begin{align*}
      \norm{\dH u}=\norm{u}.
    \end{align*}
\end{lemma}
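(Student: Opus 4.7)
The plan is to exploit the convolution structure of $\dH$. Setting
\[
K_m := \frac{1-(-1)^m}{\pi m} \quad \text{for } m \ne 0, \qquad K_0 := 0,
\]
we may write $\dH(u)_j = \sum_{m\ne 0} K_m\, u_{j-m}$, and the kernel is odd: $K_{-m} = -K_m$, since $(-1)^{-m} = (-1)^m$.

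I expect (iii) to be the heart of the lemma, so I would prove it first by passing to the discrete Fourier transform $\hat u(\xi) := \sum_j u_j e^{-ij\xi}$ for $\xi \in [-\pi,\pi]$. Discrete convolution becomes multiplication by the symbol
\[
\hat K(\xi) = \sum_{m\ne 0} K_m\, e^{-im\xi} = -\frac{4i}{\pi}\sum_{n=0}^{\infty}\frac{\sin((2n+1)\xi)}{2n+1},
\]
obtained by pairing $m$ with $-m$ and using oddness of $K$. Invoking the classical Fourier expansion $\sum_{n\ge 0}\sin((2n+1)\xi)/(2n+1) = (\pi/4)\,\mathrm{sgn}(\xi)$ on $(-\pi,\pi)$, one finds $\hat K(\xi) = -i\,\mathrm{sgn}(\xi)$, of modulus $1$ almost everywhere. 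Parseval's identity then yields $\norm{\dH u} = \norm{u}$, and simultaneously shows $\dH$ is a bounded operator on $\ell^2$.

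With (iii) in place, (i) and (ii) follow quickly. For (i), I would work first with finitely supported $u, v$ (where every sum is finite), write $\langle \dH u, v\rangle = \Dx\sum_{j,k} K_{j-k}\, u_k v_j$, swap the dummy indices $(j,k)$, and invoke $K_{-n} = -K_n$ to arrive at $-\langle u, \dH v\rangle$; density of finitely supported sequences in $\ell^2$ together with boundedness from (iii) then extends the identity to all of $\ell^2$. For (ii), the convolution $K \ast \cdot$ commutes with every translation, and $\Dpm$ is a linear combination of a translation and the identity, so $\dH(\Dpm u) = \Dpm \dH(u)$ is immediate.

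The main obstacle I anticipate is purely technical: since $K_m$ decays only like $1/m$, the defining series for $\dH u$ is merely conditionally convergent, and one cannot freely rearrange sums for a general $u \in \ell^2$. I would circumvent this by first proving everything on the dense subspace of finitely supported sequences, where all manipulations are finite, and then extending to $\ell^2$ by continuity once (iii) provides the requisite boundedness.
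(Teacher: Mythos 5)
Your argument is correct. Note, however, that the paper does not prove Lemma~\ref{lemma:imp} at all: it simply refers the reader to King's monograph on Hilbert transforms. So there is no in-paper proof to match; what you have done is supply one, and your Fourier-multiplier strategy is in fact exactly the strategy the paper \emph{does} use for the periodic analogue (Lemma~\ref{lemma123}), where the discrete Fourier transform of the kernel $c$ is computed explicitly and shown to have modulus one away from the zero mode. Your identification of the symbol $\hat K(\xi)=-i\,\mathrm{sgn}(\xi)$ via the square-wave series is the continuous-frequency counterpart of that computation, and it correctly isolates why the factor $1-(-1)^{j-k}$ in \eqref{eq:hil_full} matters: the ``full'' discrete kernel $1/(\pi m)$ alone would give a symbol of non-constant modulus and only $\norm{\dH u}\le\norm{u}$, whereas the odd--even kernel gives exact norm preservation, mirroring $\norm{\Hil u}=\norm{u}$ only on mean-zero data in the periodic case. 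Two small remarks. First, your worry about conditional convergence of the defining series is unfounded: since $K\in\ell^2(\Z)$ and $u\in\ell^2(\Z)$, the convolution $\sum_m K_m u_{j-m}$ converges absolutely for each $j$ by Cauchy--Schwarz; the density-plus-boundedness argument is nevertheless genuinely needed for the Fubini interchange in (i), since $\sum_{j,k}\abs{K_{j-k}}\abs{u_k}\abs{v_j}$ need not be finite for general $\ell^2$ data, so keep that part. Second, the inner product carries the weight $\Dx$, but it appears on both sides of every identity you prove, so nothing changes.
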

\begin{remark}
The continuous Hilbert transform \eqref{eq:HIlbert} satisfies the same properties with respect to the standard inner product in $L^2$ and ordinary derivatives.
\end{remark}
For a proof of the above lemma, we refer to the monograph by King \cite[pp.~671--674]{king}. 
It is worth mentioning that these properties are
essential in order to carry out the analysis given below.
We shall also have use for the following lemma:
\begin{lemma}
  \label{lem:l2convergence} Let $\test$ be a function in $C^3_0(\R)$,
  and define the piecewise constant function $h_{\Dx}$ by
  \begin{equation*}
    h_{\Dx}(x)=h_j=\dH(\test)(x_j)\quad \text{ for $x\in [x_j,x_{j+1})$.}
  \end{equation*}
  Then 
  \begin{equation*}
    \lim_{\Dx\to 0} \norm{H(\test) - h_{\Dx}}_{L^2(\R)} = 0.
  \end{equation*}
\end{lemma}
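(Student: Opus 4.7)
The plan is to combine two ingredients: a.e.\ pointwise convergence $h_{\Dx}(x)\to H(\test)(x)$ and convergence of $L^2$-norms $\|h_{\Dx}\|_{L^2(\R)}\to \|H(\test)\|_{L^2(\R)}$. In $L^2$, a.e.\ convergence together with convergence of norms implies strong convergence, as follows from applying Fatou's lemma to the nonnegative sequence $2|h_{\Dx}|^2+2|H(\test)|^2-|h_{\Dx}-H(\test)|^2$; so these two ingredients suffice.

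For the pointwise step, I fix $x\in\R$ not on the grid and let $j=j(\Dx)$ satisfy $x\in[x_j,x_{j+1})$, so $x_j\to x$ as $\Dx\to 0$. Regrouping \eqref{eq:hil_full} by the parity of $j-k$ yields
\begin{align*}
  \dH(\test)_j=\frac{2\Dx}{\pi}\sum_{\ell=0}^\infty\frac{\test(x_j-(2\ell+1)\Dx)-\test(x_j+(2\ell+1)\Dx)}{(2\ell+1)\Dx},
\end{align*}
which is exactly the midpoint Riemann sum with step $2\Dx$ and sample points $y_\ell=(2\ell+1)\Dx$ for $\frac{1}{\pi}\int_0^\infty g_{x_j}(y)\,dy$, where $g_a(y):=y^{-1}(\test(a-y)-\test(a+y))$. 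Since $\test\in C^3_0(\R)$, a Taylor expansion (using that $\test(a-y)-\test(a+y)$ is odd in $y$) shows that $g_a$ extends to a compactly supported $C^2$ function on $[0,\infty)$ with $g_a(0)=-2\test'(a)$. Standard midpoint-rule error bounds then give convergence of the Riemann sum to $H(\test)(x_j)$, and continuity of $H(\test)$ yields $H(\test)(x_j)\to H(\test)(x)$.

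For the norm step, Lemma~\ref{lemma:imp}(iii) (discrete norm preservation) together with the $L^2$-isometry of the continuous Hilbert transform gives
\begin{align*}
  \|h_{\Dx}\|_{L^2(\R)}^2 = \Dx\sum_j|\dH(\test)_j|^2 = \Dx\sum_j|\test(x_j)|^2 \longrightarrow \|\test\|_{L^2(\R)}^2 = \|H(\test)\|_{L^2(\R)}^2
\end{align*}
as $\Dx\to 0$, the middle limit being the usual Riemann-sum convergence for the continuous, compactly supported function $|\test|^2$.

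The main obstacle will be controlling the midpoint-rule error on the initial subinterval $[0,2\Dx]$, where $g_{x_j}$ is defined only by a removable singularity at $y=0$. This is precisely where the hypothesis $\test\in C^3_0$ is used: $C^3$ regularity of $\test$ ensures $g_{x_j}\in C^2$ across $y=0$, so the quadrature error on $[0,2\Dx]$ is $O(\Dx^3)$ just as on the smooth subintervals; since only $O((|x|+R)/\Dx)$ subintervals contribute (with $R$ bounding the support of $\test$), the total pointwise error is $O((|x|+R)\Dx^2)\to 0$, completing the a.e.\ convergence argument.
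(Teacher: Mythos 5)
Your proposal is correct, but it assembles the ingredients in a genuinely different way from the paper. The paper introduces the intermediate piecewise constant function $\tilde h_j = H(\test)(x_j)$ and estimates $\norm{H(\test)-\tilde h}_{L^2}$ and $\norm{h_{\Dx}-\tilde h}_{L^2}$ separately: the first via the fundamental theorem of calculus and the $L^2$-isometry of $H$ (an explicit $\Oh(\Dx)$ bound), the second via exactly your midpoint-rule estimate $\abs{h_j-\tilde h_j}=\Oh(\Dx^2)$ combined with a splitting of the $\ell^2$ sum into $\abs{j}\le J$ and $\abs{j}>J$. You instead invoke the soft criterion that a.e.\ convergence together with convergence of $L^2$ norms implies strong $L^2$ convergence (your Fatou argument is valid), obtaining the pointwise part from the same quadrature-error computation plus continuity of $H(\test)$, and the norm part from the discrete isometry $\norm{\dH u}=\norm{u}$ of Lemma~\ref{lemma:imp}\,(iii) together with the continuous isometry and Riemann-sum convergence of $\Dx\sum_j\test(x_j)^2$. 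The trade-off is that the paper's route is in principle quantitative while yours yields no rate; in exchange, your route cleanly sidesteps the tail term $\Dx\sum_{\abs{j}>J}(h_j^2+\tilde h_j^2)$, whose smallness must be uniform in $\Dx$ (a point the paper treats rather briskly), because the two isometries guarantee that no $L^2$ mass can escape to infinity. Both arguments rest on the same core fact, which you make more explicit than the paper does: for $\test\in C^3_0(\R)$ the integrand $y\mapsto(\test(x-y)-\test(x+y))/y$ extends to a $C^2$ function across the removable singularity at $y=0$ with second derivative controlled by $\norm{\test^{(3)}}_{L^\infty}$, so the midpoint rule incurs an $\Oh(\Dx^3)$ error per cell on the $\Oh(1/\Dx)$ cells meeting the support.
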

\begin{proof}
  We define the auxiliary function 
  \begin{equation*}
    \tilde{h}(x)=\tilde{h}_j=H(\test)(x_j)\quad \text{ for $x\in [x_j,x_{j+1})$.}
  \end{equation*}
  Then
  \begin{align*}
    \norm{H(\test) - \tilde{h}}_{L^2(\R)}^2 &=
    \sum_j \int_{x_j}^{x_{j+1}} \left(H(\test)(x) -
      H(\test)(x_j)\right)^2 \,dx \\
    &= \sum_j \int_{x_j}^{x_{j+1}} \Bigl( \int_{x_j}^x
    H(\test)'(z)\,dz\Bigr)^2 \,dx\\
    &= \sum_j \int_{x_j}^{x_{j+1}} \Bigl( \int_{x_j}^x
    H(\test')(z)\,dz\Bigr)^2 \,dx\\
    &\le \sum_j \int_{x_j}^{x_{j+1}} \int_{x_j}^{x_{j+1}} 
    (H(\test')(z))^2\,dz\, (x-x_j) \,dx \\
    &= \frac{\Dx^2}{2} \norm{H(\test')}_{L^2(\R)}^2 \\
    &= \frac{\Dx^2}{2} \norm{\test'}_{L^2(\R)}^2 .
  \end{align*}
  Next,
  \begin{align*}
    \norm{h-\tilde{h}}_{L^2(\R)}^2 &= \Dx\sum_{j} (h_j-\tilde{h}_j)^2
    \le \Dx\sum_{\abs{j}\le J} (h_j-\tilde{h}_j)^2 +
    2\Dx\sum_{\abs{j}>J} h_j^2 + \tilde{h}_j^2\\
    &=:S_1+S_2.
  \end{align*}
  Now we have that
  \begin{align*}
    h_j-\tilde{h}_j &=\sum_{k\ge 0}\Big( \int_{x_{2k}}^{x_{2k+2}}
    \psi(x_j,x_{2k+1}) \,dy - \int_{x_{2k}}^{x_{2k+2}} \psi(x_j,y)\,dy\Big),
  \end{align*}
  where $\psi(x,y)=(\test(x-y)-\test(x+y))/y$. By the error formula
  for the midpoint quadrature rule we have that 
  \begin{equation*}
   \Bigl|  \int_{x_{2k}}^{x_{2k+2}}
    \psi(x_j,x_{2k+1}) \,dy - \int_{x_{2k}}^{x_{2k+2}} \psi(x_j,y)\,dy \Bigr| \le C\Dx^3 \norm{\test^{(3)}}_{L^\infty(\R)}.
  \end{equation*}
  Furthermore, since the support of $\test$ is bounded, the above sum
  over $k$ contains only a finite number of terms, namely $M_\test/\Dx$,
  independently of $j$.
  Therefore,
  \begin{equation*}
    \abs{h_j-\tilde{h}_j}\le M_\test  C \Dx^2 \norm{\test^{(3)}}_{L^\infty(\R)},
  \end{equation*}
  and
  \begin{equation*}
    S_1 \le M_\test  C \Dx^4 \norm{\test^{(3)}}_{L^\infty(\R)} 2 J.
  \end{equation*}
  Since $\sum_j h_j^2$ and $\sum_j \tilde{h}_j^2$ are finite, we can
  choose $J$ large to make $S_2$ small, and then $\Dx$ small to make
  $S_1$ small. Hence $\norm{h-\tilde{h}}_{L^2}$ converges to zero as
  $\Dx \to 0$. By the triangle inequality $\norm{H(\test)-h}_{L^2}\le
  \norm{H(\test)-\tilde{h}}_{L^2}+\norm{h-\tilde{h}}_{L^2} \to 0$.
\end{proof}

\subsection*{The difference scheme}
We propose the following Crank--Nicolson implicit scheme to generate
approximate solutions of the BO equation \eqref{eq:main}
\begin{equation}
  \label{eq:BO_fd}
  u^{n+1}_j = u^n_j + \Dt\, \dG(u^{n+1/2})_j + \Dt\, \dH( \Dp \Dm u^{n+1/2})_j, 
  \hspace{.3cm} n\in\N_0,\ j\in \Z,
\end{equation}
where we have used the following notations:
\begin{align*}
u^{n+1/2} := \frac12(u^n + u^{n+1}), \quad \text{and} \,\, \dG(u):= \ave{u}\,Du.
\end{align*}
For the initial data we have
\begin{equation*}
  u^0_j=u_0(x_j),\hspace{.3cm} j\in \Z.
\end{equation*}
Note that since the scheme \eqref{eq:BO_fd} is implicit, we must guarantee that
the scheme is well-defined, i.e., that it admits a unique solution.  Assuming this for the moment, we show 
that the implicit scheme is $L^2$-conservative, by simply taking
inner product of the scheme \eqref{eq:BO_fd} with $u^{n+1/2}_j$. This yields 
\begin{equation*}
  \frac12  \langle u^{n+1}-u^n, u^{n+1} +u^n \rangle = \Dt  \langle u^{n+1/2}, \dG u^{n+1/2}\rangle 
  + \Dt \langle u^{n+1/2}, \dH(\Dp \Dm u^{n+1/2})\rangle.
  \end{equation*}
A simple calculation, using Lemma~\ref{lemma:imp}, reveals that 
\begin{equation}\label{eq:simple}
\langle \dH \left(\Dp \Dm u \right), u \rangle=0, \quad \text{and}\,\, \langle \dG(u), u \rangle=0.
\end{equation} 
Thus, we conclude that
\begin{equation}\label{eq:l2}
  \norm{u^{n+1}}  = \norm{u^{n}} . 
\end{equation}
To solve \eqref{eq:BO_fd}, we use a simple fixed point iteration, and define the
sequence $\seq{w_{\ell}}_{\ell\ge 0}$ by letting $w_{\ell+1}$ be the
solution of the linear equation
\begin{equation}
\label{eq:iteration scheme}
  \begin{cases}
   w_{l+1} =v + \Dt \,\dG\left(\frac{v+w_l}{2}\right) + \frac{1}{2}\Dt \,\dH \Big( \Dp \Dm \left(v+w_{l+1}\right)\Big),  \\
    w^0 = v := u^n.
  \end{cases}
\end{equation}
See also \cite[Lemmas 3.3 and 3.5]{vasu:1998}.

The following stability lemma serves as a building block for the subsequent convergence analysis. 
\begin{lemma}
  \label{lemma1}
Choose a constant $L$ such that $0<L<1$ and set
  \begin{equation*}
    K=\frac{6-L}{1-L}>6.
  \end{equation*}
We consider the iteration \eqref{eq:iteration
    scheme} with $w^0=u^n$, and assume that the following CFL
  condition holds
  \begin{equation}\label{eq:cfl}
    \lambda \le L/\left(K \norm{u^n}_{h^2}\right), \,\, \text{with}\,\,  \lambda = \Dt /\Dx.
  \end{equation}
Then there exists a function $u^{n+1}$ which solves
\eqref{eq:BO_fd}, and $\lim_{\ell\to\infty}w^\ell = u^{n+1}$. Furthermore,  
the following estimate holds:
  \begin{equation}
    \label{eq:unp1bnd}
    \norm{u^{n+1}}_{h^2}\le K \norm{u^n}_{h^2},
  \end{equation}
  where $K$ depends only on given $L$.
\end{lemma}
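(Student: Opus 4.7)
The plan is to recast \eqref{eq:iteration scheme} as a fixed-point equation and apply Banach's theorem on a ball in $h^{2}$. Set $B:=(\Dt/2)\,\dH D_{+}D_{-}$. Lemma~\ref{lemma:imp}(i)--(ii) says that $\dH$ is skew-symmetric and commutes with $D_{\pm}$, so $B$ is skew-symmetric in $\ell^{2}$; consequently $I-B$ is invertible with $\norm{(I-B)^{-1}}_{\ell^{2}\to\ell^{2}}\le 1$, the discrete Cayley transform $U:=(I-B)^{-1}(I+B)$ is an $\ell^{2}$-isometry, and both operators commute with $D_{\pm}$ and $D_{+}D_{-}$, so they do not increase any $h^{k}$-norm. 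Rewriting the iteration as $w_{\ell+1}=Uv+\Dt\,(I-B)^{-1}\dG\bigl((v+w_{\ell})/2\bigr)$ with $v:=u^{n}$ makes each iterate well defined.

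Applying the differences $D_{+}$ and $D_{+}D_{-}$ to this identity, commuting them through $U$ and $(I-B)^{-1}$, and assembling the squared $\ell^{2}$-estimates yields
\begin{equation*}
\norm{w_{\ell+1}}_{h^{2}}\le \norm{v}_{h^{2}}+\Dt\,\bigl\|\dG((v+w_{\ell})/2)\bigr\|_{h^{2}}.
\end{equation*}
The main obstacle is then the sharp nonlinear estimate $\norm{\dG(z)}_{h^{2}}\le C\Dx^{-1}\norm{z}_{h^{2}}^{2}$. To prove it, I would expand $D_{+}D_{-}(\ave{z}\,Dz)$ using the Leibniz identity \eqref{imp2}; the resulting terms are of the schematic form $D_{\pm}\ave z\cdot D_{+}Dz$, $\ave z\cdot D(D_{+}D_{-}z)$ and $Dz\cdot D_{+}D_{-}\ave z$. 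The first two contain a third-order discrete derivative, controlled by the inverse bound $\norm{D_{\pm}p}\le 2\Dx^{-1}\norm{p}$, while the remaining factors are handled by the discrete Sobolev inequality $\norm{p}_{\infty}\le C\norm{p}_{h^{1}}$; the last term does not even require the inverse bound. The analogous (and easier) estimates of $\norm{\dG(z)}$ and $\norm{D_{+}\dG(z)}$ ensure that exactly one factor of $\Dx^{-1}$ accumulates in the worst case, so that $\Dt\,\norm{\dG(z)}_{h^{2}}=\Oh(\lambda\norm{z}_{h^{2}}^{2})$ rather than $\Oh(\lambda\Dx^{-1}\norm{z}_{h^{2}}^{2})$. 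This is the key point where the CFL \eqref{eq:cfl} has to be used just enough.

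With this bound, supposing inductively that $w_{\ell}\in\calA_{K}:=\{w:\norm{w}_{h^{2}}\le K\norm{u^{n}}_{h^{2}}\}$, the CFL \eqref{eq:cfl} gives
\begin{equation*}
\Dt\,\bigl\|\dG((v+w_{\ell})/2)\bigr\|_{h^{2}}\le C\lambda\tfrac{(K+1)^{2}}{4}\norm{u^{n}}_{h^{2}}^{2}\le \tfrac{CL(K+1)^{2}}{4K}\norm{u^{n}}_{h^{2}},
\end{equation*}
and a direct algebraic check shows that $K=(6-L)/(1-L)$ renders $\calA_{K}$ invariant under the iteration. Subtracting two successive identities and invoking the bilinear structure of $\dG$ with the same Sobolev and inverse bounds produces the $\ell^{2}$-contraction
\begin{equation*}
\norm{w_{\ell+1}-w_{\ell}}\le C\lambda K\norm{u^{n}}_{h^{2}}\,\norm{w_{\ell}-w_{\ell-1}}\le CL\,\norm{w_{\ell}-w_{\ell-1}}
\end{equation*}
for $L$ sufficiently small. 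Hence $\{w_{\ell}\}$ is Cauchy in $\ell^{2}$; its limit $u^{n+1}$ satisfies \eqref{eq:BO_fd} by passing to the $\ell^{2}$-limit in the fixed-point identity (which is $\ell^{2}$-continuous on $h^{2}$-bounded sets), and the weak lower semicontinuity of the $h^{2}$-norm transfers the invariance bound to $u^{n+1}$, giving \eqref{eq:unp1bnd}.
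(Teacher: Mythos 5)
Your argument is correct in its essentials and reaches the same destination, but it is packaged differently from the paper's proof, and the two routes are worth contrasting. The paper never inverts the implicit linear operator explicitly: it works with the increments $\Delta w_\ell=w_{\ell+1}-w_\ell$, applies $\Dp\Dm$ to the increment equation, and pairs with $\Dp\Dm\Delta w_\ell$ so that the skew-symmetry of $\dH\Dp\Dm$ kills the implicit term; it then proves the two coupled inductive statements $\norm{w_\ell}_{h^2}\le K\norm{v}_{h^2}$ and $\norm{\Delta w_\ell}_{h^2}\le L\norm{\Delta w_{\ell-1}}_{h^2}$ (a contraction in $h^2$, not just $\ell^2$), and the specific value $K=(6-L)/(1-L)$ emerges from summing the geometric series $\sum L^\ell$ on top of the separately proved base estimate $\norm{w_1}_{h^2}\le 4\norm{v}_{h^2}$. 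Your Cayley-transform reformulation $w_{\ell+1}=Uv+\Dt\,(I-B)^{-1}\dG((v+w_\ell)/2)$ is a clean operator-theoretic restatement of the same skew-symmetry mechanism, and your direct ball-invariance-plus-Banach scheme is arguably tidier; the shared engine in both proofs is the nonlinear bound $\norm{\dG(z)}_{h^2}\le C\Dx^{-1}\norm{z}_{h^2}^2$ (inverse estimate on the third difference, discrete Sobolev on the coefficient), which converts $\Dt\,\Dx^{-1}$ into $\lambda$ and lets the CFL condition close the loop. Two soft spots remain on your side. First, your ``direct algebraic check'' that this particular $K$ makes $\calA_K$ invariant is asserted rather than performed, and whether the inequality $1+\tfrac{CL(K+1)^2}{4K}\le K$ holds for \emph{all} $L\in(0,1)$ depends on the unspecified constant $C$; in the paper $K$ is not guessed and checked but \emph{derived} from the telescoping sum, which is why the odd-looking formula appears. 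Second, your contraction step concludes only ``for $L$ sufficiently small,'' whereas the lemma (and the paper's proof, whose contraction factor is exactly $\lambda K\norm{v}_{h^2}\le L$) allows any $L\in(0,1)$; to match the statement you would need to track the constant in the bilinear estimate for $\Delta\dG$ rather than absorb it into a generic $C$. Your final passage to the limit --- $\ell^2$-Cauchy, uniform $h^2$ bounds, lower semicontinuity of the $h^2$-norm, and $\ell^2$-continuity of the fixed-point map on $h^2$-bounded sets --- is fine and is essentially what the paper leaves implicit.
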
 
\begin{proof}
Define  $\Delta w_l := w_{l+1}-w_l$, a straightforward calculation using \eqref{eq:iteration scheme} returns
\begin{equation}
\label{eq:test1}
  \Big(1-\frac{1}{2}\Dt\,\dH \Dp \Dm \Big) \Delta w_l = \Dt
  \left[\dG\left(\frac{v+w_l}2\right) -
    \dG\left(\frac{v+w_{l-1}}2 \right)\right]=:\Dt \Delta \dG. 
\end{equation}
Next, applying the discrete operator $\Dp \Dm$ to \eqref{eq:test1}, then multiplying the resulting equation by $\Dx \Dp \Dm \Delta w_l$,
and subsequently summing over $j \in \Z$, we conclude
\begin{equation*}
  \norm{\Dp \Dm \Delta w_l}^2 = \Dt\langle \Dp \Dm \Delta \dG,\Dp \Dm \Delta w_{l}\rangle \le \Dt
  \norm{\Dp \Dm \Delta \dG}\,\norm{\Dp \Dm \Delta w_l}.
\end{equation*}
After some calculations, we find that
\begin{equation*}
  \Delta \dG = \frac14\left[
    \widetilde{\Delta w_{l-1}} D\left(v+w_{l-1}\right) +
    \widetilde{(v+w_l)} D\left(\Delta w_{l-1}\right)\right].
\end{equation*}
Next, in order to calculate $\Dp \Dm \Delta \dG$, we use the identity \eqref{imp2} and discrete
Sobolev inequalities (cf. \cite[Lemma A.1]{HoldenKoleyRisebro:2013}). This results in
\begin{align*}
\norm{\Dp \Dm  \left(\widetilde{\Delta w_{l-1}}  D\left(v+w_{l-1}\right) \right)} 
 \le \frac{1}{\Dx} \max{\Big\{\norm{v}_{h^2}, \norm{w_{l-1}}_{h^2}\Big\}}
\norm{\Delta w_{l-1}}_{h^2},
\end{align*}
and similarly
\begin{align*}
\norm{\Dp \Dm \left( \widetilde{(v+w_l)} D\left(\Delta w_{l-1}\right)\right)} 
 \le \frac{1}{\Dx}\max{\Big\{\norm{ v}_{h^2},\norm{w_l}_{h^2}\Big\}} \norm{\Delta w_{l-1}}_{h^2}.
\end{align*}
Combining the above results, we obtain
\begin{equation}
\label{eq:ineq}
  \norm{\Dp \Dm \Delta w_l}\le \lambda
 \max{\Big\{\norm{ v}_{h^2},\norm{w_l}_{h^2}, \norm{w_{l-1}}_{h^2} \Big\}} \norm{\Delta w_{l-1}}_{h^2}.
\end{equation}
Observe that an appropriate inequality like \eqref{eq:ineq} can be obtained for $\norm{\Dp \Delta w_l}$ and $\norm{\Delta w_l}$,
which in turn can be used, along with \eqref{eq:ineq}, to conclude
$$
 \norm{\Delta w_l}_{h^2}\le \lambda
 \max{\Big\{\norm{ v}_{h^2},\norm{w_l}_{h^2}, \norm{w_{l-1}}_{h^2} \Big\}} \norm{\Delta w_{l-1}}_{h^2}. 
$$
To proceed further, we need to estimate $\norm{\Dp \Dm w_l}$. In that context, we first observe that $w_1$
satisfies the following equation
\begin{equation*}
  w_1=v + \Dt \,\dG(v) + \frac{1}{2}\Dt\,\dH \Big( \Dp \Dm (v+w_1)\Big).
\end{equation*}
Applying the discrete operator $\Dp \Dm$ to the equation satisfied by $w_1$,
and subsequently taking the inner product with $ \Dp \Dm(v+w_1)$, we get
\begin{align*}
   \norm{\Dp \Dm w_1}^2 &=  \norm{\Dp \Dm v}^2 + \Dt
  \langle\Dp \Dm \dG(v),\Dp \Dm(v+w_1)\rangle) \\
  &= \norm{\Dp \Dm v}^2 +  \Dt \langle(\Dp \Dm \dG(v),\Dp \Dm w_1\rangle\\
  &\le \norm{\Dp \Dm v}^2 + \Dt^2 \norm{\Dp \Dm \dG(v)}^2 + \frac14 \norm{\Dp \Dm w_1}^2.
\end{align*}
Next, a simple calculation along with
discrete Sobolev inequalities (cf. \cite[Lemma A.1]{HoldenKoleyRisebro:2013}) confirms that 
\begin{equation*}
  \norm{\Dp \Dm \dG(v)} = \norm{\Dp \Dm (\tilde{v}\,Dv)} \le \frac{2}{\Dx} \norm{v}_{h^2}^2.
\end{equation*}
Hence
\begin{equation}
  \norm{\Dp \Dm w_1}\le \sqrt{\frac43} \left(1 + 4\lambda^2 \norm{v}_{h^2}^2\right)^{1/2}\norm{v}_{h^2}. \label{eq:ineq1}
\end{equation}
Now choose a constant $L\in (0,1)$, and define $K$ by
\begin{equation*}
  K=\frac{6-L}{1-L}>6.
\end{equation*}
Therefore, it is clear that if $\lambda$ satisfies the CFL condition \eqref{eq:cfl}, then
\begin{equation*}\label{eq:lambdaassume1}
  \sqrt{\frac43}\sqrt{1 + 4\lambda^2 \norm{v}_{h^2}^2}\le 4.
\end{equation*}
Hence from \eqref{eq:ineq1}, making use of the interpolation inequality, we conclude that
\begin{align*}
 \norm{w_1}_{h^2} &\le K \norm{v}_{h^2}.
\end{align*}
At this point, we assume inductively that
\begin{subequations}
  \begin{align}
    \label{eq:inducta}
    \norm{w_l}_{h^2} &\le K \norm{ v}_{h^2}, \quad \text{for}\,\, l=1,\ldots,m,\\
    \label{eq:inductb}
    \norm{ \Delta w_l}_{h^2}&\le L \norm{\Delta w_{l-1}}_{h^2}, \quad \text{for}\,\, l=2,\ldots,m.
  \end{align}
\end{subequations}
We have already shown \eqref{eq:inducta} for $m=1$. To show
\eqref{eq:inductb} for $m=2$, note that
\begin{align*}
  \norm{\Delta w_2}_{h^2} \le \lambda \max\Big\{\norm{v}_{h^2},\norm{ w_1}_{h^2}\Big\}
  \norm{ \Delta w_1}_{h^2}
  \le 4 \lambda \norm{ v}_{h^2} \norm{ \Delta w_1}_{h^2}
  \le L \norm{ \Delta w_1}_{h^2},
\end{align*}
by CFL condition \eqref{eq:cfl}. 
To show \eqref{eq:inducta} for $m>1$, 
\begin{align*}
 \norm{ w_{m+1}}_{h^2} & \le \sum_{l=0}^m \norm{ \Delta w_l}_{h^2} + \norm{ v}_{h^2} 
  \le \norm{(w_1-v)}_{h^2} \sum_{l=0}^m L^l + \norm{ v}_{h^2} \\
  &\le \left(\norm{w_1}_{h^2} +\norm{ v}_{h^2} \right) \frac{1}{1-L} + \norm{ v}_{h^2} 
  \le  \frac{4+2-L}{1-L} \norm{v}_{h^2} =K\norm{ v}_{h^2}.
\end{align*}
Then 
\begin{equation*}
  \norm{\Delta w_{m+1}}_{h^2} \le \lambda K \norm{ v}_{h^2} \,\norm{ \Delta w_m}_{h^2} \le L
  \norm{\Delta w_m}_{h^2},
\end{equation*}
if the CFL condition \eqref{eq:cfl} holds.

To sum up, if $L\in (0,1)$, and $K$ is defined by $K=(6-L)/(1-L)$, and
$\lambda$ satisfies the CFL-condition 
\begin{equation*}
  \lambda \le \frac{L}{K\norm{ v}_{h^2} },
\end{equation*}
then we have the desired estimate ￼\eqref{eq:unp1bnd}. Finally,
using \eqref{eq:inductb}, one can show that
$\{w_{\ell}\}$ is Cauchy, hence $\{w_{\ell}\}$ converges. This
completes the proof.
\end{proof}

\begin{remark}
Observe that the above result guarantees that the iteration
scheme converges for one time step under CFL condition \eqref{eq:cfl},
where the ratio between temporal and spatial mesh sizes
must be smaller than an upper bound that depends on the computed
solution at that time, i.e., $u^n$. 
Since we want
the CFL-condition only to  depend on the initial data
$u_0$, we have to derive local a priori bounds for the computed
solution $u^n$. This will be achieved in Theorem~\ref{thm2} to conclude that the
iteration scheme \eqref{eq:iteration scheme} converges for
sufficiently small $\Dt$.
\end{remark}

The following lemma is the most important step towards stability, and the very heart of this paper:
\begin{lemma}
  \label{lemma3.1}
  Let the approximate solution $u^n$ be generated by the
  Crank--Nicolson scheme \eqref{eq:BO_fd}, where $\Dt$ and $\Dx$ are
  such that \eqref{eq:cfl} holds.
  Then  we have that  
  \begin{align*}
    D_+^t\left( \norm{ u^n}_{h^2}\right) \le \sqrt{\frac32} \,
      \norm{ u^{n+1/2}}_{h^2}^2.
  \end{align*}
\end{lemma}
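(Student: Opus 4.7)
The strategy is a discrete $h^2$-energy estimate for the scheme \eqref{eq:BO_fd}, exploiting that the dispersive part $\dH(\Dp\Dm u^{n+1/2})$ contributes nothing to the $h^2$ norm while the nonlinearity $\dG(u^{n+1/2})$ produces only trilinear expressions bounded by $\|u^{n+1/2}\|_{h^2}^3$. To pass from the squared norm to the norm itself I would factorize
\begin{equation*}
\Dtp\|u^n\|_{h^2} = \frac{\|u^{n+1}\|_{h^2}^2 - \|u^n\|_{h^2}^2}{\Dt\bigl(\|u^{n+1}\|_{h^2} + \|u^n\|_{h^2}\bigr)}
\end{equation*}
and use the triangle inequality $\|u^{n+1}\|_{h^2}+\|u^n\|_{h^2}\ge\|u^{n+1}+u^n\|_{h^2}=2\|u^{n+1/2}\|_{h^2}$, so that it suffices to bound $\|u^{n+1}\|_{h^2}^2-\|u^n\|_{h^2}^2$ by $2\sqrt{3/2}\,\Dt\,\|u^{n+1/2}\|_{h^2}^3$.

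Setting $u=u^{n+1/2}$ and plugging the scheme into the expanded $h^2$ inner product gives
\begin{equation*}
\|u^{n+1}\|_{h^2}^2 - \|u^n\|_{h^2}^2 = 2\Dt\!\!\sum_{D\in\{I,\Dp,\Dp\Dm\}}\!\!\bigl\langle D\dG(u)+D\dH(\Dp\Dm u),\,Du\bigr\rangle.
\end{equation*}
By Lemma~\ref{lemma:imp} the operator $\dH$ is skew-symmetric and commutes with $\Dp,\Dm$, and $\Dp\Dm$ is self-adjoint, so $\dH\Dp\Dm$ is anti-self-adjoint. Pushing each $D\in\{\Dp,\Dp\Dm\}$ through $\dH\Dp\Dm$ by translation invariance and then using anti-self-adjointness makes all three dispersive contributions vanish. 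The $D=I$ nonlinear term also vanishes, by $\langle\dG(u),u\rangle=0$ in \eqref{eq:simple}, leaving
\begin{equation*}
\|u^{n+1}\|_{h^2}^2-\|u^n\|_{h^2}^2 = 2\Dt\bigl[\langle\Dp\dG(u),\Dp u\rangle + \langle\Dp\Dm\dG(u),\Dp\Dm u\rangle\bigr].
\end{equation*}

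To bound the remaining two trilinear forms I would expand $\Dp(\tilde u\,Du)$ and $\Dp\Dm(\tilde u\,Du)$ with the discrete Leibniz rules and the identity \eqref{imp2}, using that averaging commutes with differentiation ($\Dp\tilde u=\widetilde{\Dp u}$) so the averaged factor never raises the order. On any summand that would otherwise require $\|D\Dp\Dm u\|$ I would integrate by parts against $\Dp\Dm u$ using \eqref{imp1} to drop the order by one. Each remaining triple product then has one factor in $\ell^\infty$ and two in $\ell^2$; the discrete Sobolev inequality $\|v\|_\infty^2\le 2\|v\|\|\Dp v\|\le \|v\|_{h^2}^2$ bounds $\|u\|_\infty$, $\|Du\|_\infty$, $\|\Dm\tilde u\|_\infty$ by $\|u\|_{h^2}$, and the elementary estimate $\|\Dp u\|\|\Dp\Dm u\|\le\tfrac12(\|\Dp u\|^2+\|\Dp\Dm u\|^2)\le\tfrac12\|u\|_{h^2}^2$ turns the sum into a multiple of $\|u\|_{h^2}^3$. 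The main obstacle is tracking the precise constant: the continuous analogue reduces (after integration by parts) to $\tfrac12\int u_x^3\,dx+\tfrac52\int u_xu_{xx}^2\,dx$, so a naive Sobolev estimate would produce a constant near $\tfrac52$; reaching the stated $\sqrt{3/2}$ requires a sharp Cauchy--Schwarz on the vector $(\|\Dp u\|,\|\Dp\Dm u\|)$ paired with componentwise bounds, and a clean accounting of the commutator terms generated by the discrete (rather than continuous) product rule.
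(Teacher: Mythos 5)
Your plan is essentially the paper's own proof: the paper likewise applies the operators $I$, $\Dp$, $\Dp\Dm$ to the scheme, pairs with the corresponding difference of $u^{n+1/2}$, kills the dispersive contribution by the skew-symmetry and translation invariance of $\dH$ (Lemma~\ref{lemma:imp}), expands the nonlinear term with \eqref{imp2}, removes the over-differentiated summand by summation by parts via \eqref{imp1}, and closes with the discrete Sobolev inequality together with the triangle-inequality bound on the denominator $\norm{u^{n+1}}_{h^2}+\norm{u^{n}}_{h^2}\ge 2\norm{u^{n+1/2}}_{h^2}$; the only organizational difference is that the paper estimates the three seminorms $\norm{u^n}$, $\norm{\Dp u^n}$, $\norm{\Dp\Dm u^n}$ separately before assembling the $h^2$ bound. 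Your reservation about reaching the constant $\sqrt{3/2}$ is fair but does not mark a divergence from the paper, whose own accounting (four contributions $\mathcal{E}^1,\dots,\mathcal{E}^4$, each bounded by $\sqrt{3/2}\,\norm{\Dp\Dm u}\,\norm{u}_{h^2}^2$) is no sharper than what you describe, and for the application in Theorem~\ref{thm2} any absolute constant suffices.
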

\begin{proof}
If $\Dp\Dm u^n=0$, then $u^n=0$ and $u^{n+1}=0$ since $u^n, u^{n+1}\in  \ell^2$, so that the lemma
trivially holds. Therefore we can assume that $\Dp\Dm u^n\ne 0$.

Applying the discrete operator $\Dp \Dm$ to \eqref{eq:BO_fd}, and subsequently taking inner product with $\Dp \Dm u^{n+1/2}$ yields
\begin{align*}
\frac12 \norm{\Dp \Dm u^{n+1}}^2 &= \frac12 \norm{\Dp \Dm u^{n}}^2 + \Delta t \langle \Dp \Dm \dG(u^{n+1/2}), \Dp \Dm u^{n+1/2} \rangle,
\end{align*}
using \eqref{eq:simple}, which implies
\begin{equation}\label{eq:dtbound}
D_+^t\left(\norm{\Dp \Dm u^n}\right) = 2 \frac{ \langle \Dp \Dm \dG(u^{n+1/2}), \Dp \Dm u^{n+1/2} \rangle}{\norm{\Dp \Dm u^{n+1}} + \norm{\Dp \Dm u^n}}.
\end{equation}
For the moment we drop the superscript $n+1/2$ from our notation, and use the notation $u$ for $u^{n+1/2}$,
where $n$ is fixed. We use the product rule \eqref{imp2} to write
\begin{align*}
  \langle \Dp \Dm \dG(u), \Dp \Dm u \rangle &=  \langle \Dp \Dm
  \left(\ave{u}\,Du \right), \Dp \Dm u \rangle \\ 
  &  = \langle \Dm \ave{u}\, \Dp(Du), \Dp \Dm u \rangle +  \langle S^-
  \ave{u}\, \Dp \Dm (Du), \Dp \Dm u \rangle \\ 
  &  \quad+ \langle \Dp \ave{u}\, \Dp(Du), \Dp \Dm u \rangle +
  \langle \Dp \Dm \ave{u}\, Du, \Dp \Dm u \rangle \\ 
  &=: \mathcal{E}^1(u) + \mathcal{E}^2(u) + \mathcal{E}^3(u) +
  \mathcal{E}^4(u),
\end{align*}
in the obvious notation. 
By the discrete
Sobolev inequality (cf.~\cite[Lemma A.1]{HoldenKoleyRisebro:2013})
\begin{equation*}
  \norm{\Dm u}_{\infty}\le \sqrt{\frac32} \left( \norm{\Dp\Dm u}+\norm{u}\right),
\end{equation*}
and the relation $\norm{\Dp\Dm u}= \norm{\Dp^2 u}$,
we apply the Cauchy--Schwarz  inequality to obtain
\begin{align*}
  \abs{\mathcal{E}^1(u)} &\le  \norm{\Dm \ave{u}}_{\infty} \norm{\Dp
    Du} \norm{\Dp \Dm u} \\ 
  &\le  \norm{\Dm \ave{u}}_{\infty}\frac12\big(
  \norm{\Dp^2u}+\norm{\Dp \Dm u} \big)\norm{\Dp \Dm u} \\ 
  &= \norm{\Dm {u}}_{\infty} \norm{\Dp \Dm u}^2 \\ 
  & \le\sqrt{\frac32} (\norm{\Dp \Dm u} + \norm{u}) \norm{\Dp \Dm u}^2 \\ 
  & \le \sqrt{\frac32} \, \norm{\Dp \Dm u}\, \norm{u}_{h^2}^2.
\end{align*}
Similar arguments show that 		
\begin{align*}
\abs{\mathcal{E}^3(u)} \le \sqrt{\frac32}\, \norm{\Dp \Dm u}\, \norm{u}_{h^2}^2, 
\,\text{and}\,\abs{\mathcal{E}^4(u)} \le \sqrt{\frac32} \, \norm{\Dp \Dm u}\, \norm{u}_{h^2}^2.
\end{align*}		
To estimate the last term, we proceed as follows:
\begin{align*}
  \mathcal{E}^2(u) &:= \langle S^- \ave{u}\, \Dp \Dm (Du), \Dp \Dm u
  \rangle\\ 
  &=\langle S^- \ave{u}\, D(\Dp \Dm u), \Dp \Dm u \rangle \\
  &=\langle S^-u\, \Dp\Dm u,D(\Dp\Dm u)\rangle\\
  &= -  \langle D\left(S^- \ave{u}\,\Dp \Dm u  \right), \Dp \Dm u
  \rangle \\ 
  &= -\frac{\Dx}{2} \langle \Dp(S^- \ave{u}) \, D(\Dp \Dm u), \Dp \Dm
  u \rangle \\ 
  &\qquad- \frac12 \langle S^- \Dp \Dm u\,  D(S^- \ave{u}), \Dp \Dm u
  \rangle\quad\text{by \eqref{imp1}}\\ 
  &=: \mathcal{E}^{21}(u)+ \mathcal{E}^{22}(u).
\end{align*}
Again using the discrete Sobolev inequality (cf. \cite[Lemma
A.1]{HoldenKoleyRisebro:2013}) we see that
\begin{align*}
  \abs{\mathcal{E}^{21}(u)} &\le \frac{\Dx}{2} \norm{ \Dp(S^-
    \ave{u})}_{\infty} \, \norm{D\Dp \Dm u} \, \norm{\Dp \Dm u}\\ 
  &=\norm{\Dm u}_{\infty} \left(\Dx  \norm{D\Dp \Dm u}
  \right)\norm{\Dp \Dm u}\\ 
  &\le\norm{ \Dm u}_{\infty} \,  \norm{\Dp \Dm u} \, \norm{\Dp
    \Dm u}\\ 
  & \le \sqrt{\frac32} \, \norm{\Dp \Dm u}\, \norm{u}_{h^2}^2.
\end{align*}
Similarly,
\begin{align*}
\abs{\mathcal{E}^{22}(u)} \le \sqrt{\frac32}\, \norm{\Dp \Dm u}\, \norm{u}_{h^2}^2.
\end{align*}
Therefore, we conclude
\begin{align*}
\abs{\mathcal{E}^2(u)} \le \sqrt{\frac32} \, \norm{\Dp \Dm u}\, \norm{u}_{h^2}^2.
\end{align*}	
Hence
\begin{align*}
  2 \frac{ \langle \Dp \Dm \dG(u^{n+1/2}), \Dp \Dm u^{n+1/2}
    \rangle}{\norm{\Dp \Dm u^{n+1}} + \norm{\Dp \Dm u^n}}
  &\le 2\sqrt{\frac32} \frac{\norm{\Dp \Dm u^{n+1/2}}\,
    \norm{u^{n+1/2}}_{h^2}^2}{\norm{\Dp \Dm u^{n+1}} + \norm{\Dp \Dm
      u^n}} \\ 
  & \le\sqrt{\frac32}\norm{u^{n+1/2}}_{h^2}^2,
\end{align*}
which by \eqref{eq:dtbound} implies that
\begin{equation}\label{eq:a1}
  \abs{D_+^t\left(\norm{\Dp \Dm u^n}\right)} \le  \sqrt{\frac32}\,\norm{u^{n+1/2}}_{h^2}^2. 
\end{equation}
In the same manner, applying the operator $\Dp$ to \eqref{eq:BO_fd},
and subsequently taking the inner product with $\Dp u^{n+1/2}$, yields
\begin{equation*}
  D_+^t\left(\norm{\Dp  u^n}\right) = 2 \frac{ \langle \Dp
    \dG(u^{n+1/2}), \Dp  u^{n+1/2} \rangle}{\norm{\Dp  u^{n+1}} +
    \norm{\Dp  u^n}}. 
\end{equation*}
Using the discrete 
Sobolev inequality $\norm{u}_\infty \le \norm{u}_{h^1}$ 
\begin{align*}
 \abs{\langle \Dp \dG(u), \Dp u \rangle}
 &= \abs{\langle \ave{u}Du, \Dm\Dp u \rangle}\\
 &\le \norm{u}_\infty \,\norm{Du} \,\norm{\Dp\Dm u}\\
 &\le \norm{\Dp u} \norm{u}_{h^2}^2.
\end{align*}
Thus, we obtain
\begin{equation} \label{eq:a2}
  \abs{D_+^t\left(\norm{\Dp u^n}\right)}\le \sqrt{\frac32}\norm{u^{n+1/2}}_{h^2}^2.
\end{equation}
Furthermore, the conservative property \eqref{eq:l2} implies that
\begin{equation} \label{eq:a3}
D_+^t\left(\norm{u^n}\right) =0.
\end{equation}  
Combining \eqref{eq:a1}, \eqref{eq:a2}, and \eqref{eq:a3} concludes the proof. 
\end{proof}

We can now state the following stability result:
\begin{theorem}
  \label{thm2}
  If the initial function $u_0$ is in $H^2$, 
  then there exist
  a time $T>0$ and a constant $C$, both depending only on $\norm{u_0}_{H^2}$, such that 
  \begin{align*}
    \norm{u^n}_{h^2} \le C, \quad \text{for $t_n \le T$}
  \end{align*}
  for all sufficiently small $\lambda=\Dt/\Dx$. 
\end{theorem}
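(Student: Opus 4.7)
The plan is a discrete Gr\"onwall/bootstrap argument on $y_n := \norm{u^n}_{h^2}$. By Lemma~\ref{lemmaimp}, first observe $y_0 \le C_0$ for some $C_0$ depending only on $\norm{u_0}_{H^2}$. Fix $M := 2 C_0$ and impose, once and for all, the uniform condition $\lambda \le L/(KM)$; this is a CFL restriction depending only on the initial data. The goal is to prove by induction on $n$ that $y_n \le M$ for every time step with $t_n \le T$, for a suitable $T=T(\norm{u_0}_{H^2})$.

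Suppose inductively $y_k \le M$ for $k=0,1,\ldots,n$. Since $\lambda \le L/(KM) \le L/(K y_n)$, Lemma~\ref{lemma1} applies: $u^{n+1}$ exists and $y_{n+1} \le K y_n \le KM$. The triangle inequality applied to $u^{n+1/2} = (u^n + u^{n+1})/2$ then gives
\begin{equation*}
  \norm{u^{n+1/2}}_{h^2} \le \frac{y_n + y_{n+1}}{2} \le \frac{(K+1) M}{2}.
\end{equation*}
Invoking Lemma~\ref{lemma3.1}, which requires exactly the CFL condition we have imposed, yields $y_{n+1} - y_n \le \Dt \sqrt{3/2}\, \norm{u^{n+1/2}}_{h^2}^2$. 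Summing this telescoping inequality from step $0$ to step $n$ produces the discrete Riccati-type bound
\begin{equation*}
  y_{n+1} \le y_0 + t_{n+1}\cdot \frac{\sqrt{3/2}\,(K+1)^2 M^2}{4}.
\end{equation*}

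Choosing $T := 1/\bigl(\sqrt{3/2}\,(K+1)^2 C_0\bigr)$, which depends only on $\norm{u_0}_{H^2}$, makes the right-hand side at most $C_0 + C_0 = 2 C_0 = M$ whenever $t_{n+1} \le T$. This closes the induction and delivers the claimed uniform bound with $C = M$.

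The main obstacle is the apparent circularity in the hypotheses: the CFL condition \eqref{eq:cfl} depends on $\norm{u^n}_{h^2}$ rather than on the initial data, and is required both by the one-step existence Lemma~\ref{lemma1} and by the increment estimate Lemma~\ref{lemma3.1}. The bootstrap resolves this cleanly: the inductive hypothesis $y_n \le M$ is precisely what reduces the $n$-dependent requirement $\lambda \le L/(K y_n)$ to the single uniform condition $\lambda \le L/(KM)$ fixed from the outset, so the CFL restriction is ultimately expressed purely in terms of $\norm{u_0}_{H^2}$. The time $T$ is finite but not arbitrary, reflecting the possible blow-up of the corresponding continuous Riccati ODE $y' = c y^2$ at $t = 1/(c y_0)$.
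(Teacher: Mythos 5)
Your proof is correct, and it takes a somewhat different route from the paper's. The paper also sets $y_n=\norm{u^n}_{h^2}$ and combines Lemma~\ref{lemma1} with Lemma~\ref{lemma3.1} to get the discrete Riccati inequality $y_{n+1}\le y_n+\sqrt{3/2}\,(Ky_n)^2\,\Dt$, but then it closes the argument by \emph{comparison with the blow-up ODE} $w'=\sqrt{3/2}\,K^2w^2$, $w(0)=\norm{u_0}_{H^2}$: one shows inductively $y_n\le w(t_n)$ using the monotonicity of $w$, and takes $T$ strictly below the blow-up time $\hat T$, with $C=w(T)$. You instead run a bootstrap: fix the target bound $M=2C_0$ in advance, use the inductive hypothesis $y_k\le M$ to freeze the quadratic right-hand side at the constant $\sqrt{3/2}\,(K{+}1)^2M^2/4$, telescope to get a bound linear in $t_{n+1}$, and choose $T$ so the linear growth stays below $M$. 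Both arguments are valid and yield a $T$ of the same order $\sim 1/(\text{const}\cdot\norm{u_0}_{H^2})$; the ODE comparison is marginally sharper (it exploits that the increments are small early on), while your version is more elementary and, notably, handles the CFL circularity more transparently: you impose the single uniform restriction $\lambda\le L/(KM)$ from the outset and verify at each step that it implies the $n$-dependent condition \eqref{eq:cfl}, whereas the paper first argues with a step-dependent $\Dt_n$ and only remarks at the end that a uniform spacing $\lambda\le L/(KC)$ suffices. Your use of the triangle inequality to bound $\norm{u^{n+1/2}}_{h^2}$ by $(y_n+y_{n+1})/2$ with $y_{n+1}\le Ky_n$ is also a cleaner justification of the step the paper compresses into ``$\norm{u^{n+1/2}}\le K\norm{u^n}$.''
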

\begin{proof}
  Set $y_n = \norm{u^{n}}_{h^2}$. By Lemma~\ref{lemma1}, we have
  $\norm{u^{n+1/2}}\le K\norm{u^n}$, so that Lemma~\ref{lemma3.1}
  gives
  \begin{equation*}
    y_{n+1}\le y_n + \sqrt{\frac32}\left(K y_n\right)^2
  \end{equation*}
  for all $\Dt/\Dx\le\lambda_n=L/(K \norm{u^n}_{h^2})$.  We choose a time discretization $\Dt_n$.
  Let $w(t)$ solve the differential equation $w'(t)=\sqrt{3/2}K^2 w(t)^2$,
  $w(0)=\norm{u_0}_{H^2}$. This equation has a blow up time $\hat{T}=1/(\sqrt{3/2}K^2 \norm{u_0}_{H^2})$,
  and for $t<T$, $w$ is strictly increasing. Choose $T<\hat{T}$, we
  have that $w(t)\le w(T)$, and we claim that also $y_n\le w(t_n)\le
  w(T)$ for $t_n\le T$. This claim is true for $n=0$, and we
  inductively assume that it is true for $n=0,\ldots,N$. Then
  \begin{align*}
    y_{N+1}=y_N + \Dt_n CK^2 y_N^2 &\le w(t_N) + \int_{t_N}^{t_{N+1}}
     \sqrt{\frac32}K^2 w(t_N)^2\,dt\\
    &\le w(t_N)+\int_{t_N}^{t_{N+1}} w'(s)\,ds = w(t_{N+1}).
  \end{align*}
  This proves that $y_n\le w(T)$ for all $n$ such that $t_n\le T$, thus $ \norm{u^n}_{h^2} \le C=w(T)$.   We can now use a uniform spacing, and let 
  $\Dt/\Dx\le\lambda\le L/(KC)$. 
\end{proof}

Now we turn to the estimate of the temporal derivative of approximate solution $u^n$.
This bound will enable us to apply the Arzel\`a--Ascoli theorem in order to prove the convergence of an approximate solution $u^n$.  From the scheme \eqref{eq:BO_fd}, using the propety $\norm{\Dp\Dm u}=\norm{ \dH (\Dp\Dm u)}$,
 we see that 
\begin{align*}
\norm{D^t_{+} u^n}\le \norm{\dG(u^{n+1/2})} \, + \, \norm{\Dp\Dm u^{n+1/2}}.
\end{align*}
By the discrete Sobolev inequality
$$
\norm{\dG(u^{n+1/2})} \le \norm{u^{n+1/2}}_{\infty} \norm{Du^{n+1/2}} \le C \norm{u^{n+1/2}}_{h^2}^2.
$$
Therefore Theorem~\ref{thm2} implies that $\norm{D^t_{+} u^n}\le C$.

Thus, we can follow Sj\"oberg \cite{Sjoberg:1970} to prove
convergence of the scheme \eqref{eq:BO_fd} for $t<T$. We reason as follows: We
construct the piecewise quadric continuous interpolation
$u_{\Dx}(x,t)$ in two steps. First we make a spatial interpolation for
each $t_n$:
\begin{equation} \label{eq:bilinearinterp}
  \begin{aligned}
    u^n(x) &=u_j^n +(x-x_j)Du_j^n \\
    &\quad +\frac12(x-x_j)^2\Dp\Dm u_j^n, \quad x\in
    [x_j,x_{j+1}), \, j\in\Z.
  \end{aligned}
\end{equation}
Next we interpolate in time: 
\begin{equation} \label{eq:bilinearinterp1} u_{\Dx}(x,t) =u^n(x)
  +(t-t_n)\Dtp u^n(x), \quad x\in \R, \, t\in [t_n, t_{n+1}], \,
  (n+1)t_{n+1}\le T.
\end{equation}
Observe that
\begin{equation*}
  u_{\Dx}(x_j,t_n) =u_j^n, \qquad j\in\Z, \quad n\in\N_0.
\end{equation*}
Note that $u_\Dx$ is continuous everywhere and continuously
differentiable in space.

The function $u_\Dx$ satisfies for $x\in [x_j,x_{j+1})$ and $t\in
[t_n, t_{n+1}]$
\begin{align}
  \partial_x u_\Dx(x,t)&=Du^n_j+(x-x_j)\Dp\Dm u^n_j \label{eq:udxH1C}  \\
  &\quad  +(t-t_n)\Dtp\Big( Du^n_j+(x-x_j)\Dp\Dm u^n_j \Big), \notag\\
  \partial_x^2 u_\Dx(x,t)&= \Dp\Dm u^n_j+(t-t_n)\Dtp \Dp\Dm u^n_j,\label{eq:udxtL2C} \\
  \partial_t u_\Dx(x,t)&= \Dtp u^n(x),\label{eq:udxtL2C1}
\end{align}
which implies
\begin{align}
  \norm{u_{\Dx}(\dott,t)}_{L^2(\R)}&\le \norm{u_0}_{L^2(\R)},\label{eq:udxL2}\\
  \norm{\partial_x u_\Dx(\dott,t)}_{L^2(\R)} &\le C,\label{eq:udxH1}\\
  \norm{\partial_t u_\Dx(\dott,t)}_{L^2(\R)} &\le C,\label{eq:udxtL2} \\
  \norm{\partial_{x}^2 u_\Dx(\dott,t)}_{L^2(\R)}&\le
  C,\label{eq:udxH3}
\end{align}
for $t\le T$ and for a constant $C$ which is independent of
$\Dx$. 
The bound on $\partial_t u_\Dx$ also implies that $u_\Dx\in
\Lip([0,T];L^2(\R))$. Then an application of the
Arzel\`a--Ascoli theorem using \eqref{eq:udxL2} shows that the set
$\seq{u_\Dx}_{\Dx>0}$ is sequentially compact in
$C([0,T];L^2(\R))$. Thus there exists a sequence
$\seq{u_{\Dx_j}}_{j\in\N}$ which converges uniformly in
$C([0,T];L^2(\R))$ to some function $u$. 

Next we show that the limit $u$ is a weak solution of the Cauchy
problem \eqref{eq:main}, i.e., $u$ satisfies
\begin{equation}
  \label{weak}
    \int_0^T \int_{-\infty}^{\infty} \big(u \psi_t - \frac{u^2}{2}\psi_x - 
    u H(\psi_{xx})\big) \,dxdt 
    +\int_{-\infty}^{\infty} \psi(x,0)u_0(x)\,dx=0,
\end{equation}
for all test functions $\psi\in C^\infty_0(\R\times[0,T))$.

To do this, we start by noting that the piecewise constant function
\begin{equation*}
  \bar{u}_{\Dx}(x,t)=u^n_j\quad \text{ for $(x,t)\in
    [x_j,x_{j+1})\times [t_n,t_{n+1})$,}
\end{equation*}
also converges to $u$ in $L^\infty([0,T];L^2_{\mathrm{loc}}(\R))$. It is more
convenient to apply a Lax--Wendroff type argument to $\bar{u}_\Dx$
than to $u_\Dx$. 

Let $\psi \in C_0^{\infty}(\R \times [0,T))$ be any test function and denote $\psi_j^n = \psi(x_j, t_n)$.
Multiplying the scheme \eqref{eq:BO_fd} by $\Dx \Dt \psi_j^n$, and subsequently summing over all $j$ and $n$ yields 
\begin{align*}
  \Dx \Dt \sum_{j} \sum_{n} \psi_j^n \,D^t_{+} u^n_j & = \Dx \Dt
  \sum_{j} \sum_{n} \psi_j^n  \,\dG(u^{n+1/2})_j \\
  & \quad - \Dx \Dt \sum_{j} \sum_{n} \psi_j^n\, \dH(\Dp \Dm
  u^{n+1/2})_j.
\end{align*}
It is straightforward to show that
\begin{align*}
\Dx \Dt \sum_{j} \sum_{n} \psi_j^n \,D^t_{+} u^n_j  
&= - \Dx \Dt \sum_{j} \sum_{n} u^n_j\, D^t_{-} \psi_j^n  - \Dx  \sum_{j}  \psi_j^0 \, u^0_j \\
&  \to -\int_{\R} \int_0^T u \psi_t \,dx\,dt - \int_{\R} \psi(x,0) u_0(x) \,dx \, \text{as} \,\, \Dx \downarrow 0.
\end{align*}
Next, for the nonlinear term, we proceed as follows:
\begin{align*}
  \Dx \Dt \sum_{j} \sum_{n} \psi_j^n \,\dG(u^{n+1/2})_j
  &= \Dx \Dt \sum_{j} \sum_{n} \psi_j^n \, \widetilde{u_j^{n+1/2}}\, Du_j^{n+1/2} \\
  &= \Dx \Dt \sum_{j} \sum_{n} \psi_j^n \Bigg[ \frac13
  D\Big(u_j^{n+\frac12}\Big)^2 + \frac13 u_j^{n+1/2} \,D u_j^{n+1/2}
  \Bigg].
\end{align*}
A simple summation-by-parts formula yields
\begin{align*}
\frac13 \Dx \Dt \sum_{j} \sum_{n} \psi_j^n \, D\Big(u_j^{n+1/2}\Big)^2 
&= - \frac13 \Dx \Dt \sum_{j} \sum_{n}  \Big(u_j^{n+1/2}\Big)^2 D\psi_j^n \\
&\to -\frac13 \int_{\R} \int_0^T u^2\, \psi_x \,dx\,dt,  \,\, \text{as} \, \,\Dx \downarrow 0.
\end{align*}
Again, using  summation-by-parts 
\begin{align*}
\frac13 \Dx \Dt \sum_{j} \sum_{n} \psi_j^n  u_j^{n+1/2} \,D u_j^{n+1/2} 
&= - \frac{1}{12} \Dx \Dt \sum_{j} \sum_{n} u_j^{n+1/2}\,u_{j-1}^{n+1/2}\, \Dm \psi_j^n  \\
&\quad - \frac{1}{12} \Dx \Dt \sum_{j} \sum_{n} u_j^{n+1/2}\, u_{j+1}^{n+1/2}\, \Dp \psi_j^n \\
& \to -\frac16 \int_{\R} \int_0^T u^2\, \psi_x \,dx\,dt  \,\, \text{as} \, \,\Dx \downarrow 0.
 \end{align*}
 Here we have used the general formula
 \begin{equation*}
 \langle p , q D q\rangle    = - \frac14\langle q \Sm q ,    \Dm p\rangle  -  \frac14\langle q \Sp q,  \Dp p\rangle.
\end{equation*}
Hence, we conclude 
\begin{equation*}
\Dx \Dt \sum_{j} \sum_{n} \psi_j^n \,\dG(u_j^{n+1/2}) 
\to -\frac12 \int_{\R} \int_0^T u^2\, \psi_x \,dx\,dt  \,\, \text{as} \, \,\Dx \downarrow 0.
\end{equation*}
We are left with the term involving the Hilbert transform. With a
slight abuse of notation we identify a sequence $\seq{v_j}$ with a
piecewise constant function, and use the notation $\langle
\dott,\dott\rangle$ for the $\ell^2$ inner product as well as for the
inner product in $L^2(\R)$. Then
\begin{equation*}
  - \Dx \Dt \sum_{j} \sum_{n} \psi_j^n\, \dH(\Dp \Dm
  u^{n+1/2})_j = \Dt \sum_n \langle u^{n+1/2}, \dH(\Dp\Dm \psi^n)\rangle.
\end{equation*}
Next,
\begin{align*}
  \abs{ \langle u^{n+1/2}, \dH(\Dp\Dm \psi^n) \rangle-\langle
    u,H(\psi_{xx}(\dott,t_n)\rangle}&\le \abs{ \langle u^{n+1/2}-u, \dH(\Dp\Dm
    \psi^n)\rangle} \\
  &\qquad +
  \abs{\langle u,\dH(\Dp\Dm\psi^n)-H(\psi_{xx}(\dott,t_n))\rangle}\\
  &\le \norm{u^{n+1/2}-u}\,\norm{\Dp\Dm \psi^n} \\
  &\qquad +
  \norm{u}\,\norm{\dH(\Dp\Dm\psi^n)-H(\psi_{xx}(\dott,t_n))}.
\end{align*}
The first term on the right will tend to zero, since $u^{n+1/2}$
converges to $u$ in $L^2$. Regarding the second term we have that
the piecewise constant function $\Dp\Dm \psi^n$ will converge to
$\psi_{xx}(\dott,t_n)$ since $\psi$ is smooth, as will the piecewise constant
function $v^n_j:=\psi_{xx}(x_j,t_n)$. Using these observations
\begin{align*}
  \norm{\dH(\Dp\Dm\psi^n)-H(\psi_{xx}(\dott,t_n))} &\le 
  \norm{\dH(\Dp\Dm\psi^n-v^n)}+\norm{\dH(v^n)-H(\psi_{xx}(\dott,t_n))}\\
  &\le \norm{\Dp\Dm\psi^n-v^n} + \norm{\dH(v^n)-H(\psi_{xx}(\dott,t_n))}.
\end{align*}
We have already observed that the first term on the right will tend to
zero as $\Dx$ to zero, and the second term will vanish by
Lemma~\ref{lem:l2convergence} since $\psi_{xx}$ is smooth. Thus we
have established that
\begin{equation*}
\Dx \Dt \sum_{j} \sum_{n} \psi_j^n\, \dH(\Dp \Dm
  u^{n+1/2})_j \to
  - \int_0^T \int_{\R} u H(\psi_{xx}) \,dxdt  \,\, \text{as} \, \,\Dx \downarrow 0,
\end{equation*}
which shows that $u$ is a weak solution.

The bounds \eqref{eq:udxH1}, \eqref{eq:udxtL2}, and \eqref{eq:udxH3}
mean that $u$ is actually a strong solution such that \eqref{eq:main}
holds as an $L^2$ identity. Thus the limit $u$ is the unique solution
to the BO equation \eqref{eq:main} taking the initial data $u_0$.

Summing up, we have proved the following theorem:
\begin{theorem}\label{thm:H3convergence}
  Assume that $u_0\in H^2(\R)$.  Then there exists a finite time
  $T$, depending only on $\norm{u_0}_{H^2(\R)}$, such that for
  $t\le T$, the difference approximations defined by
  \eqref{eq:BO_fd} converge uniformly in $C(\R\times [0,T])$ to
  the unique solution of the Benjamin--Ono equation \eqref{eq:main} as
  $\Dx\to 0$ with $\Dt=\order{\Dx}$.
\end{theorem}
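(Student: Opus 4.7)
The plan is to combine the uniform $h^2$ bound from Theorem~\ref{thm2} with a compactness argument in the spirit of Sj\"oberg's classical convergence proof for the KdV equation. First I would build the continuous interpolant $u_\Dx(x,t)$ that is piecewise quadratic in $x$ and piecewise linear in $t$, using the grid values $u^n_j$, $Du^n_j$, $\Dp\Dm u^n_j$ and $\Dtp u^n_j$, as in \eqref{eq:bilinearinterp}--\eqref{eq:bilinearinterp1}. Theorem~\ref{thm2} together with the scheme \eqref{eq:BO_fd}, the norm preservation $\norm{\dH(\Dp\Dm u)} = \norm{\Dp\Dm u}$ from Lemma~\ref{lemma:imp}, and the discrete Sobolev bound $\norm{\dG(u)}\le C\norm{u}_{h^2}^2$ yield uniform estimates on $\norm{u_\Dx}_{L^2}$, $\norm{\partial_x u_\Dx}_{L^2}$, $\norm{\partial_x^2 u_\Dx}_{L^2}$, and $\norm{\partial_t u_\Dx}_{L^2}$ on $\R\times[0,T]$. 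In particular $u_\Dx \in \Lip([0,T];L^2(\R))$ with values in a bounded subset of $H^2(\R)$.

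From this, Arzel\`a--Ascoli in $C([0,T];L^2_{\mathrm{loc}}(\R))$, upgraded using the uniform $L^2$ bound and decay at infinity coming from the $H^2$ bound, yields a subsequence $\seq{u_{\Dx_j}}$ converging in $C([0,T];L^2(\R))$ to some $u \in L^\infty([0,T];H^2(\R))$. The piecewise constant companion $\bar u_\Dx$ also converges to $u$, which is the convenient object for a Lax--Wendroff-style passage to the limit.

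Next I would verify that $u$ satisfies the weak form \eqref{weak}. Multiplying \eqref{eq:BO_fd} by $\Dx\Dt\,\psi^n_j$ with $\psi\in C^\infty_0(\R\times[0,T))$ and summing, the time-derivative term becomes a discrete summation by parts that passes to $\int\int u\psi_t\,dx\,dt + \int\psi(\cdot,0)u_0\,dx$ in the limit, while the nonlinear term $\dG(u^{n+1/2})$ is rewritten using \eqref{imp1}--\eqref{imp2} as a combination of flux-difference terms, each of which converges to $-\tfrac12\int\int u^2\psi_x\,dx\,dt$ using the $L^2$ convergence of $u_\Dx$. The genuinely delicate step will be the Hilbert-transform term: using skew-symmetry in Lemma~\ref{lemma:imp}, I move $\dH\Dp\Dm$ onto $\psi$ and must show $\dH(\Dp\Dm\psi^n)\to H(\psi_{xx}(\cdot,t_n))$ in $L^2(\R)$. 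The triangle inequality
\[
  \norm{\dH(\Dp\Dm\psi^n) - H(\psi_{xx})} \le \norm{\Dp\Dm\psi^n - v^n} + \norm{\dH(v^n) - H(\psi_{xx})},
\]
with $v^n_j := \psi_{xx}(x_j,t_n)$, reduces the first piece to smoothness of $\psi$ and norm preservation of $\dH$, and the second piece is exactly the statement of Lemma~\ref{lem:l2convergence}. This is the main obstacle, since the discrete Hilbert transform is a nonlocal operator and naive pointwise consistency is not enough; Lemma~\ref{lem:l2convergence} is tailored precisely to close this gap.

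Finally, the uniform bounds on $\partial_x^2 u_\Dx$ and $\partial_t u_\Dx$ lift $u$ from a weak to a strong solution: $u_t$, $uu_x$ and $Hu_{xx}$ all make sense in $L^2(\R)$, so \eqref{eq:main} holds as an $L^2$-identity with $u(\cdot,0)=u_0$. Since strong solutions of \eqref{eq:main} for $u_0\in H^2(\R)$ are unique (by the well-posedness results of Iorio and Ponce cited in the introduction), the whole family $\seq{u_\Dx}_{\Dx>0}$ converges to $u$, not merely a subsequence. To upgrade $C([0,T];L^2(\R))$ convergence to uniform convergence in $C(\R\times[0,T])$, I use the uniform $H^2$ bound and the embedding $H^2(\R)\hookrightarrow C_0(\R)$: functions in a bounded set of $H^2$ with $L^2$-small difference are uniformly small, which, together with the Lipschitz-in-time regularity, gives uniform convergence on $\R\times[0,T]$ under the stated CFL condition $\Dt=\order{\Dx}$.
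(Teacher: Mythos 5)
Your proposal follows essentially the same route as the paper: the piecewise quadratic-in-space, linear-in-time interpolant, the uniform bounds from Theorem~\ref{thm2}, Arzel\`a--Ascoli compactness in $C([0,T];L^2(\R))$, a Lax--Wendroff passage to the limit with the nonlinear term split via the discrete Leibniz rules, the Hilbert-transform term handled by skew-symmetry together with Lemma~\ref{lem:l2convergence}, and the final upgrade from weak to strong solution. The only additions are your explicit remarks on uniqueness forcing convergence of the whole family and on the $H^2(\R)\hookrightarrow C_0(\R)$ embedding to get uniform convergence, both of which are consistent refinements of the paper's argument rather than a different approach.
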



\section{The periodic case}\label{sec:periodic}
To keep the presentation fairly short we have only provided details in
the full line case.  However, the same proofs apply also in the
periodic case but the discrete Hilbert transform is defined
differently. In this case it should be an approximation of
the singular integral
\begin{equation}
  \label{PHT}
  \Hil u(x) = \mathrm{P.V.} \frac{1}{2L} \int_{-L}^{L}\!\!
  \cot\left(\frac{\pi}{2L} (x-y)\right) u(y)\,dy, 
\end{equation}
such that Lemma~\ref{lemma:imp} holds.
A simple use of the trigonometric identity
\begin{align*}
  2\cot(\theta)=\cot\left(\frac\theta2\right) -
  \tan\left(\frac\theta2\right),
\end{align*}
helps use to rewrite \eqref{PHT} as
\begin{equation*}
  \Hil u:= T_1  u  - T_2 u,
\end{equation*}
where
\begin{equation}
  T_1u(x) = \mathrm{P.V.} \frac{1}{4L} \int_{-L}^{L}
  \cot\left(\frac{\pi}{4L} (x-y)\right) u(y)\,dy, 
\end{equation}
and
\begin{equation}
  T_2u(x) = \mathrm{P.V.} \frac{1}{4L} \int_{-L}^{L}
  \tan\left(\frac{\pi}{4L} (x-y)\right) u(y)\,dy. 
\end{equation}
Let $n$ be an \emph{even} integer such that $0\le n\le N-1$.  For
this $n$, we have
\begin{align*}
  T_1u(x_n) &= \mathrm{P.V.} \frac{1}{4L} \int_{x_0}^{x_N} \cot\left(\frac{\pi}{4L} (x_n-y)\right) u(y)\,dy\\
  &= \frac{1}{4L}\sum_{j=0}^{\frac{N-3}{2}} \int_{x_{2j}}^{x_{2j+2}} \cot\left(\frac{\pi}{4L} (x_n-y)\right) u(y)\,dy \\
  &\quad + \frac{1}{4L} \int_{x_{N-1}}^{x_N} \cot\left(\frac{\pi}{4L}
    (x_n-y)\right) u(y)\,dy.
\end{align*}
We apply the midpoint rule on each of these integrals in the sum and
endpoint rule for the last integral, and we obtain the following
quadrature formula:
\begin{equation}
  \label{T_1quad}
  \begin{aligned}
    T_1u(x_n)&=\frac{1}{4L}\sum_{j=\,\text{odd}}2\Dx \, \,u(x_j)\cot\left(\frac{\pi}{4L} (x_n-x_j)\right)\\
    &\quad +\frac{1}{4L}\Dx \, \,u(x_N)\cot\left(\frac{\pi}{4L}
      (x_n-x_N)\right).
  \end{aligned}
\end{equation}
Using the identity $\Dx=2L/N$, we define
\begin{equation}
  \label{T_1quada}
  T_1u_n=\frac{1}{N}\sum_{j=\,\text{odd}}u_j\cot\left(\frac{\pi(n-j)}{2N} \right)\\
  +\frac{1}{2N}u(x_N)\cot\left(\frac{\pi}{4L} (x_n-x_N)\right).
\end{equation}
Next we write $T_2u(x_n)$ as
\begin{align*}
  T_2u(x_n) &= \mathrm{P.V.} \frac{1}{4L} \int_{x_0}^{x_N}
  \tan\left(\frac{\pi}{4L} (x_n-y)\right) u(y)\,dy\\ 
  & = \frac{1}{4L}\sum_{j=\,\text{odd}}\int_{x_{j}}^{x_{j+2}}
  \tan\left(\frac{\pi}{4L} (x_n-y)\right) u(y)\,dy \\ 
  &\quad + \frac{1}{4L} \int_{x_{0}}^{x_1} \tan\left(\frac{\pi}{4L}
    (x_n-y)\right) u(y)\,dy.
\end{align*}
To obtain the quadrature formula, we use the midpoint rule on each of
the integral in the sum and endpoint rule on the last integral,
\begin{equation}
  \label{T_2quad}
  \begin{aligned}
    T_2u(x_n)&=\frac{1}{4L}\sum_{j=\,\text{even},j\neq 0}2\Dx \,
    \,u(x_j)\tan\left(\frac{\pi}{4L} (x_n-x_j)\right)\\ 
    &\quad  +\frac{1}{4L}\Dx \, \,u(x_0)\tan\left(\frac{\pi}{4L}
      (x_n-x_0)\right).
  \end{aligned}
\end{equation}
Using the identity $\Dx=2L/N$, we have
\begin{equation}
  \label{T_2quada}
  T_2u_n=\frac{1}{N}\sum_{  j=\,\text{even},\;   j\neq 0  }     u_j
  \tan   \left(\frac{\pi(n-j)}{2N} \right)\\ 
  +\frac{1}{2N}      u(x_0)         \tan     \left(   \frac{\pi}{4L}
    (x_n-x_0)   \right). 
\end{equation}
Since $u$ is $N$-periodic grid function, we have
\begin{align*}
  u(x_N)\cot\left(\frac{\pi}{4L} (x_n-x_N)\right)= -
  \,u(x_0)\tan\left(\frac{\pi}{4L} (x_n-x_0)\right).
\end{align*}
Therefore, adding \eqref{T_2quada} and \eqref{T_1quada} we have, for
even $n$
\begin{align*}
  (\mathbb{\Hil}
  u)_n=\frac{1}{N}\sum_{j=\,\text{odd}}u_j\cot\left(\frac{\pi(n-j)}{2N}
  \right) - \frac{1}{N}\sum_{ j=\,\text{even} } u_j \tan
  \left(\frac{\pi(n-j)}{2N} \right).
\end{align*}
Similarly, we have for odd $n$
\begin{align*}
  (\mathbb{\Hil}
  u)_n=\frac{1}{N}\sum_{j=\,\text{even}}u_j\cot\left(\frac{\pi(n-j)}{2N}
  \right) - \frac{1}{N}\sum_{ j=\,\text{odd} } u_j \tan
  \left(\frac{\pi(n-j)}{2N} \right).
\end{align*}
Combining above two relations, we conclude
\begin{equation}
  \label{b formula}
  \mathbb{\Hil} u = c*u,
\end{equation}
where the vector $c$ is given by
\begin{equation}
  \label{c formula}
  c_n=\frac{1-(-1)^n}{2N}  \cot\left( \frac{\pi n}{2N}\right) -
  \frac{1+(-1)^n}{2N}  \tan\left( \frac{\pi n}{2N}\right). 
\end{equation}
Next we prove the following properties of discrete Hilbert transform
$\mathbb{\Hil}$ defined by \eqref{b formula}--\eqref{c formula}:
\begin{lemma}
\label{lemma123}
  The discrete Hilbert transform is skew symmetric. Moreover, it 
  satisfies $\norm{\mathbb{\Hil} u}\le\norm{u}$ and $\norm{u}=\norm{\mathbb{\Hil}
    u}$ provided $\sum_{j=0}^{N-1}u_{j}=0$. Furthermore, we have
  \begin{align*}
    \norm{\mathbb{\Hil}\Dp\Dm u}=\norm{\Dp\Dm u}.
  \end{align*}
\end{lemma}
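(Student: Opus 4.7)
The plan is to diagonalize the convolution operator defined by \eqref{b formula} on the cyclic group $\Z/N\Z$ using the discrete Fourier transform, which reduces everything to an analysis of the Fourier multiplier $\hat c_k$. First, skew symmetry comes almost for free: since $N$ is odd and both $\cot$ and $\tan$ are odd, one checks, using the identity $\cot(\pi/2-\theta)=\tan\theta$, that $c_{N-n}=-c_n$ in $\Z/N\Z$. The standard reindexing $\langle c*u, v\rangle = \Dx\sum_{j,k} c_{j-k}u_k v_j = -\Dx\sum_{j,k} c_{k-j}u_k v_j = -\langle u, c*v\rangle$ then yields $\langle \Hil u, v\rangle = -\langle u, \Hil v\rangle$.

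The heart of the proof is the computation of
\[
  \hat c_k = \sum_{n=0}^{N-1} c_n e^{-2\pi i kn/N}, \qquad k=0,1,\ldots,N-1.
\]
Since $\Hil$ is a quadrature for the continuous periodic Hilbert transform, whose Fourier multiplier is $-i\,\mathrm{sign}(\xi)$, one expects $\hat c_0=0$ and $|\hat c_k|=1$ for $k\ne 0$ (identifying $k$ with a representative in $\{-(N-1)/2,\ldots,(N-1)/2\}$). Splitting $c = c^{(1)}-c^{(2)}$ according to $T_1$ and $T_2$, the task reduces to evaluating exponential sums of the form
\[
\sum_{n\text{ odd}}\cot\!\Big(\frac{\pi n}{2N}\Big)e^{-2\pi i kn/N}\quad \text{and}\quad \sum_{\substack{n\text{ even}\\ n\ne 0}}\tan\!\Big(\frac{\pi n}{2N}\Big)e^{-2\pi i kn/N}.
\]
Each of these can be handled via the identity $\cot\theta = -i(1+e^{-2i\theta})/(1-e^{-2i\theta})$ (and its analogue for $\tan$), which turns them into finite geometric series over $2N$-th roots of unity. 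Carrying this calculation out cleanly is the main technical obstacle of the lemma.

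Once the eigenvalue structure $\hat c_0=0$, $|\hat c_k|=1$ for $k\ne 0$ is in hand, Parseval's identity gives
\[
\norm{\Hil u}^2 = \frac{\Dx}{N}\sum_{k=0}^{N-1}|\hat c_k|^2|\hat u_k|^2 = \norm{u}^2 - \frac{\Dx}{N}|\hat u_0|^2 \le \norm{u}^2,
\]
with equality precisely when $\hat u_0 = \sum_j u_j = 0$. For the final assertion, periodicity makes $\sum_j(\Dp\Dm u)_j$ telescope to zero, so the equality case applies to $\Dp\Dm u$ and yields $\norm{\Hil\Dp\Dm u}=\norm{\Dp\Dm u}$.
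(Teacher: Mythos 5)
Your overall strategy is exactly the paper's: diagonalize $\mathbb{\Hil}=c*{}$ by the DFT, show the multiplier satisfies $\hat c_0=0$ and $|\hat c_k|=1$ for $k\ne 0$, and conclude by Parseval; the skew-symmetry argument via $c_{-n}=-c_n$ and the final observation that $\sum_j(\Dp\Dm u)_j=0$ by periodicity (which the paper leaves implicit) are both correct. The difficulty is that you have left the one genuinely hard step --- the identification of $\hat c$ --- as an acknowledged ``main technical obstacle,'' and the route you sketch for it does not go through as stated. Writing $\cot\theta = i(1+q)/(1-q)$ with $q=e^{-2i\theta}$ (note the sign: it is $+i$, not $-i$) does \emph{not} turn
$\sum_{n\,\mathrm{odd}}\cot\bigl(\tfrac{\pi n}{2N}\bigr)e^{-2\pi ikn/N}$
into a finite geometric series: the factor $(1+q)/(1-q)$ is a rational function of a root of unity on the unit circle, so it has no convergent geometric expansion, and evaluating such cotangent--exponential sums directly requires an additional device (e.g.\ the finite identity $\frac{1}{1-q}=-\frac1M\sum_{m=1}^{M-1}mq^m$ for $q^M=1$, $q\ne1$, or a summation-by-parts argument). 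As written, the central claim $\hat c_k\in\{0,\pm i\}$ is therefore asserted rather than proved.

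The paper sidesteps this entirely by running the computation \emph{backwards}: it postulates the multiplier $\hat c_k=-i$ for $k=1,\dots,\tfrac{N-1}{2}$, $\hat c_0=0$, $\hat c_k=i$ for $k=\tfrac{N+1}{2},\dots,N-1$, and then verifies the guess by computing the \emph{inverse} DFT $\sum_k\hat c_k e^{2\pi ikn/N}=2\sum_{k=1}^{(N-1)/2}\sin\bigl(\tfrac{2\pi kn}{N}\bigr)$, which genuinely is a finite geometric sum; half-angle identities then reduce it to $Nc_n$. If you adopt that one reversal, the rest of your argument (Parseval, the equality case $\hat u_0=0$, and the telescoping of $\sum_j(\Dp\Dm u)_j$) closes the proof.
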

\begin{proof}
  The skew-symmetric property of $\mathbb{\Hil}$ follows from the fact
  that $c_{-n}=-c_n$, for any $n$.  Furthermore, we use the
  \emph{discrete Fourier transform} (DFT) to prove that
  $\mathbb{\Hil}$ preserves the $\ell^2$-norm.

  First we recall the definition of discrete Fourier transform.  For a
  given $N$-periodic grid function $u$, we define the DFT by
  \begin{align*}
    \hat{u}_k= \sum_{n=0}^{N-1}u_n\, e^{-i\frac{2\pi k n}{N}}, \quad
    k=0,1,2,..., N-1,
  \end{align*}
  and the inversion formula is then
  \begin{align*}
    u_k=\frac1N \sum_{n=0}^{N-1}\hat{u}_n\, e^{i\frac{2\pi k n}{N}},
    \quad k=0,1,2,..., N-1.
  \end{align*}
  Then the Parseval formula reads
  \begin{align*}
    \norm{\hat{u}}= \sqrt{N}\norm{u}.
  \end{align*}

  Next we compute the DFT of $c$. We claim that the Fourier transform
  of $c$ is given by
  \begin{equation}
    \label{chat}
    \hat{c}_n=
    \begin{cases}
      -i & \text{for $n=1,2,...,\frac{N-1}{2}$},\\
      \,0 &\text{for $n=0$},\\
      \,i &\text{for $n=\frac{N+1}2,....., N-2,N-1$}.
    \end{cases}
  \end{equation}
  To prove this we use inverse discrete Fourier transform. From
  \eqref{chat}, we see that
  \begin{align*}
    \sum_{k=0}^{N-1} \hat{c}_k e^{i\frac{2\pi k n}{N}}&=
    -i\sum_{k=1}^{(N-1)/2} \hat{c}_k e^{i\frac{2\pi k n}{N}}
    +i\sum_{k=(N+1)/2}^{N-1} \hat{c}_k e^{i\frac{2\pi k n}{N}}\\
    &=2\sum_{k=1}^{(N-1)/2}\sin\left(\frac{2\pi 
         kn}{N}\right)\\ 
    &= 2 \, \Im\left( \sum_{k=1}^{(N-1)/2}\exp\left(\frac{2\pi ikn}{N}\right) \right)\\
    &= 2\, \Im\left( \frac{e^{i\frac{2\pi
            n}{N}\frac{N-1}{2}}-1}{e^{i\frac{2\pi  n}{N}}-1}
      e^{i\frac{2\pi  n}{N}}\right)\\ 
    &=  - \Im\left(i \frac{e^{i\frac{2\pi
            n}{N}\frac{N-1}{2}}-1}{\sin(\frac{\pi  n}{N})}
      e^{i\frac{\pi  n}{N}}\right)\\ 
    &=  - \Im\left(i \frac{(-1)^n e^{-i\frac{\pi
            n}{N}}-1}{\sin(\frac{\pi  n}{N})}  e^{i\frac{\pi  n}{N}}\right)\\
    &=  - \Im\left(i \frac{(-1)^n -e^{i\frac{\pi  n}{N}}}{\sin(\frac{\pi  n}{N})} \right)\\
    &=   \cot \left(\frac{\pi n}{N}\right) - \frac{(-1)^n}{\sin(\pi n/N)} \\
    &= \frac{\cos^2(\frac{\pi n}{2N})- \sin^2(\frac{\pi
        n}{2N})}{2\sin(\frac{\pi n}{2N})\cos(\frac{\pi n}{2N})}
    - \frac{(-1)^n\big(\cos^2(\frac{\pi n}{2N})+ \sin^2(\frac{\pi
        n}{2N}) \big)}{2\sin(\frac{\pi n}{2N})\cos(\frac{\pi n}{2N})}
    \\ 
    &=N\Big(\frac{1-(-1)^n}{2N} \cot\left( \frac{\pi n}{2N}\right) -
    \frac{1+(-1)^n}{2N}  \tan\left( \frac{\pi n}{2N}\right)\Big)\\
    &=N c_n.
  \end{align*}
  This proves the claim.  Therefore, we have
  \begin{align*}
    \widehat{\mathbb{\Hil} u}_n=\hat{c}_n \, \hat{u}_n.
  \end{align*}
  Now using Parseval's formula we have
  \begin{align*}
    \norm{\mathbb{\Hil}u}&= \norm{\widehat{\mathbb{\Hil} u}}\\
    &=\norm{\hat{c}\, \hat{u}}\\
    &= \Big(\sum_{n=1}^{N-1} \abs{\hat{u}}^2_n\Big)^{1/2} \\
    &\le \norm{\hat{u}}\\
    &=\norm{u}.
  \end{align*}
  Thus we have $ \norm{\mathbb{\Hil}u}\le \norm{u}$, and $
  \norm{\mathbb{\Hil}u}= \norm{u}$ provided $\hat{u}(0)=0$, that is,
  \begin{align*}
    \sum_{j=0}^{N-1}u_j=0.
  \end{align*}
\end{proof}
Keeping in mind the above discretization for the Hilbert transform, we propose the following implicit scheme to generate
approximate solutions to the BO equation
\eqref{eq:main_per}
\begin{equation}
  \label{eq:BO_fd_per}
  u^{n+1}_j = u^n_j + \Dt\, \dG(u^{n+1/2})_j + \Dt\, \mathbb{\Hil}(\Dp \Dm u^{n+1/2})_j, 
\end{equation}
for $n\ge 0$ and $j=0,\ldots,N-1$.
Regarding $u^0$ we set 
\begin{equation*}
  u^0_j=u_0(x_j),\qquad j= 0, \dots, N-1.
\end{equation*}
Using the properties of the discrete Hilbert transform \eqref{b
formula}--\eqref{c formula}, and using identical arguments to those
used 
in the proof of Theorem~\ref{thm:H3convergence}, we can proove  the following
theorem:
\begin{theorem}
  \label{thm:H3convergence_per}
  Assume that $u_0\in H^2(\mathbb{T})$.  Then there exists a finite
  time $T$, depending only on $\norm{u_0}_{H^2(\mathbb{T})}$,
  such that for $t\le T$, the difference approximations defined
  by \eqref{eq:BO_fd_per} converge uniformly in $C(\mathbb{T}\times
  [0,T])$ to the unique solution of the Benjamin--Ono equation
  \eqref{eq:main_per} as $\Dx\to 0$ with $\Dt=\order{\Dx}$.
\end{theorem}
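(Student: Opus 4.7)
The plan is to mirror the proof of Theorem~\ref{thm:H3convergence} almost verbatim, with adjustments arising from the different structure of the discrete Hilbert transform $\Hil$ on $\T$. First I would verify that the three properties recorded in Lemma~\ref{lemma:imp} for $\dH$ are enjoyed by $\Hil$ as well. Skew-symmetry and the $\ell^2$-bound $\norm{\Hil u}\le \norm{u}$ are already established in Lemma~\ref{lemma123}; translation invariance $\Hil(\Dpm u)=\Dpm\Hil(u)$ follows immediately from the convolution representation $\Hil u = c*u$, since differencing commutes with convolution. Moreover, for any $N$-periodic grid function $u$, the telescoping identity $\sum_{j=0}^{N-1}\Dp\Dm u_j=0$ gives $\widehat{\Dp\Dm u}(0)=0$, so by Lemma~\ref{lemma123} the sharp identity $\norm{\Hil(\Dp\Dm u)}=\norm{\Dp\Dm u}$ holds. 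This is the only norm identity for $\Hil$ actually invoked in the full-line argument.

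Second, I would replay the fixed-point stability lemma (Lemma~\ref{lemma1}), the $h^2$ growth estimate (Lemma~\ref{lemma3.1}), and the a priori bound (Theorem~\ref{thm2}). Each of these rests only on the three properties of the discrete Hilbert transform recorded above, on the discrete product rules \eqref{imp1}--\eqref{imp2}, on discrete Sobolev inequalities, and on the cancellations $\langle \Hil(\Dp\Dm u), u\rangle=0$ and $\langle \dG(u), u\rangle=0$; all of these transfer immediately to the periodic setting, and the CFL condition is formulated exactly as in \eqref{eq:cfl}. Defining the piecewise-quadratic-in-$x$, piecewise-linear-in-$t$ interpolant $u_\Dx$ by the obvious periodic analogue of \eqref{eq:bilinearinterp}--\eqref{eq:bilinearinterp1}, we obtain the same uniform $H^2(\T)$ and Lipschitz-in-time-with-values-in-$L^2(\T)$ estimates, so Arzel\`a--Ascoli yields a subsequence converging uniformly in $C([0,T];L^2(\T))$ to some limit $u$.

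Third, I would perform the Lax--Wendroff style passage to the limit against a test function $\psi\in C^\infty(\T\times[0,T))$. The temporal and nonlinear terms converge exactly as on $\R$, by the same summation-by-parts identities, with all boundary contributions vanishing by periodicity. The only genuinely new analytic ingredient is the periodic analogue of Lemma~\ref{lem:l2convergence}, namely that for $\test\in C^\infty(\T)$ the piecewise constant extension of $\seq{\Hil(\test)_j}$ converges to $\Hil\test$ in $L^2(\T)$. I expect this to be the main obstacle. Here I would exploit the decomposition $\Hil = T_1 - T_2$ introduced before \eqref{T_1quad}: on each non-diagonal subinterval of length $2\Dx$ the kernels $\cot(\pi(x-y)/(4L))$ and $\tan(\pi(x-y)/(4L))$ are smooth, so the midpoint-rule error is locally $O(\Dx^3)$ and summing the $N/2\sim 1/\Dx$ contributions yields an $O(\Dx^2)$ bound; the lone diagonal subinterval, where the endpoint rule is used and the kernel is singular, is controlled directly against $\norm{\test'}_\infty$ by exploiting integrability of the principal-value kernel near the diagonal.

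Finally, once the limit has been identified as a weak solution of \eqref{eq:main_per}, the uniform $H^2(\T)$ bounds promote $u$ to a strong $L^2$ solution, and uniqueness from the well-posedness theory cited in the introduction (in particular Molinet's result in $H^s(\T)$ for $s\ge 0$) identifies $u$ as \emph{the} solution with initial data $u_0$, so the entire sequence $\seq{u_\Dx}$ converges, completing the proof.
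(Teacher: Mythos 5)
Your proposal is correct and follows essentially the same route as the paper, which itself proves Theorem~\ref{thm:H3convergence_per} simply by establishing the required properties of the periodic discrete Hilbert transform (Lemma~\ref{lemma123}) and then invoking the full-line argument verbatim. If anything you are more explicit than the paper on the one genuinely new point, namely the periodic analogue of Lemma~\ref{lem:l2convergence} and the observation that the exact norm identity is only ever needed for $\Hil(\Dp\Dm u)$, where the zero-mean condition of Lemma~\ref{lemma123} is automatic.
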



\section{Numerical experiments}
\label{sec:numerics}
The fully-discrete scheme given by \eqref{eq:BO_fd} has been tested on
suitable test cases, namely soliton interactions, in order to
demonstrate its effectiveness.  It is well-known that a soliton is a
self-reinforcing solitary wave that maintains its shape while
traveling at constant speed. Solitons are the result of a delicate
cancellation of nonlinear and dispersive effects in the
medium. Several authors, see, e.g.,
\cite{BoydXu,vasu:1998,Pelloni:2000} have studied the soliton
interactions for the BO equation.
\subsection*{A one-soliton solution}
The Benjamin--Ono equation
\eqref{eq:main_per} has one-periodic wave solution that tend towards
the one-soliton in the long wave limit, i.e., when the wave number
goes to zero.  It is given by
\begin{equation}
  \begin{aligned}
    u(x,t) = -\frac{2c\delta^2}{1 -\sqrt{1-\delta^2} \cos(c\delta(x
      -ct))}, \quad \text{with} \quad \delta=\frac{\pi}{cL},
  \end{aligned}
  \label{eq:BO_onesol}
\end{equation}
where $L$ denotes the period and $c$ is the wave speed.

We have applied scheme \eqref{eq:BO_fd_per} to simulate the periodic
one wave solution \eqref{eq:BO_onesol} with $L=15$, $c=0.25$ and
initial data $u_0(x) =u(x,0)$. The exact solution is periodic in time
with the period $p=120$. In Figure~\ref{fig:1} we show the approximate
and exact solution at $t=4p=480$.
\begin{figure}[h]
  \centering
  \includegraphics[width=0.7\linewidth]{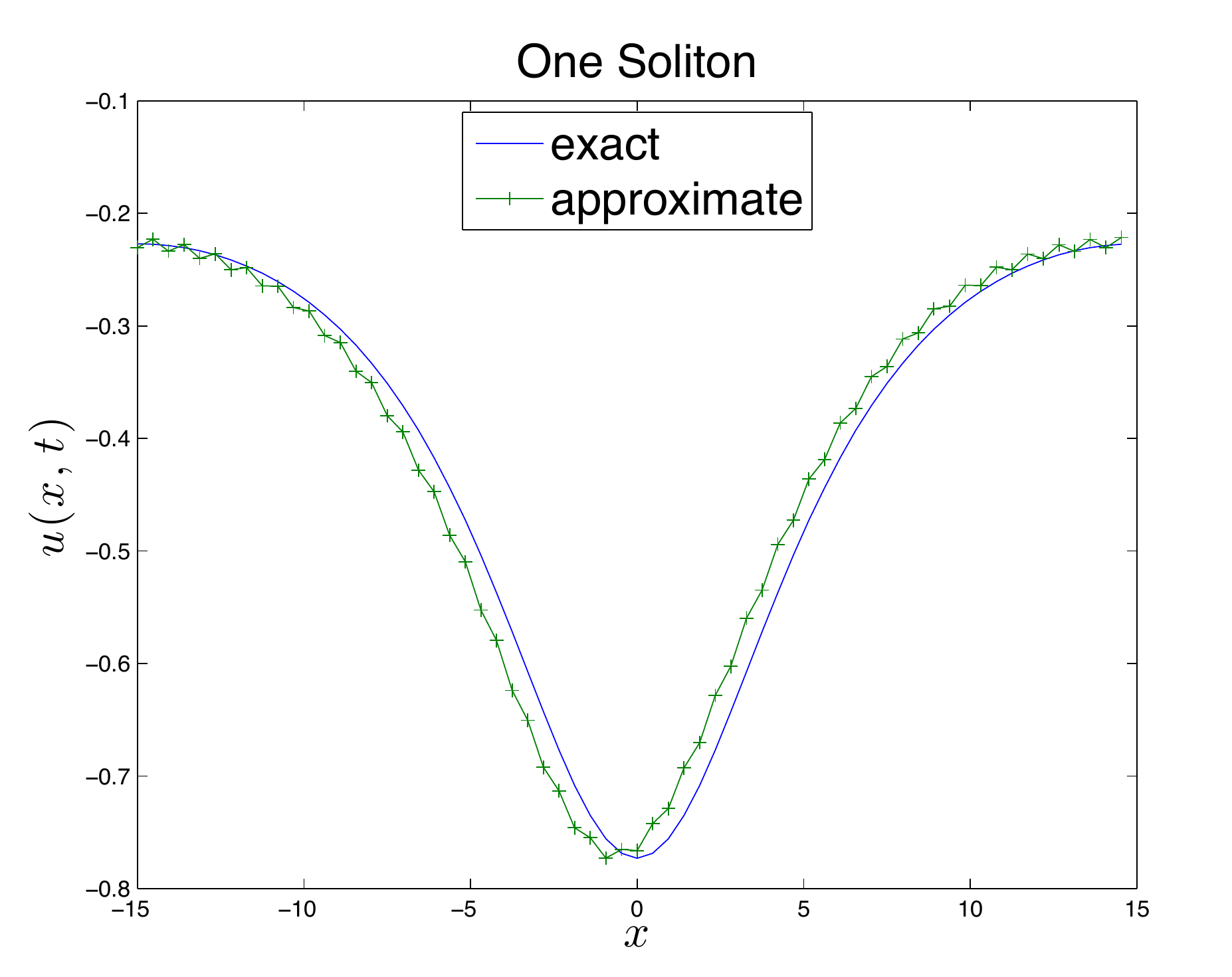}
  \caption{Comparison of exact and numerical solutions with initial
    data \eqref{eq:BO_onesol}. }
  \label{fig:1}
\end{figure}
We have also computed numerically the error for a range of $\Dx$,
where the relative $L^2$ error at time $T$ is defined by
\begin{equation*}
  E1(T)=100 \frac{\norm{u-u_{\Dx}}_2}{\norm{u}_{2}}
\end{equation*}
where the norms were computed using the trapezoid rule on the points $x_j$,
and the relative $L^{\infty}$ error is defined by
\begin{equation*}
  E2(T)=100 \frac{\norm{u-u_{\Dx}}_{\infty}}{\norm{u}_{\infty}}.
\end{equation*}
In Table~\ref{tab:1}, we show $L^2$ relative errors as
well as $L^{\infty}$ relative errors for this example at time $T=480$.
\begin{table}[h]
  \centering
  \begin{tabular}[h]{c|r r r r }
    $N$ &\multicolumn{1}{c}{$E1$} &\multicolumn{1}{c}{rate}
    &\multicolumn{1}{c}{$E2$} &\multicolumn{1}{c}{rate}  \\ 
    \hline\\[-2ex]
    33  & 21.24       & &
    23.35           &                                       \\[-1ex]
    65   & 5.76      &\raisebox{1.5ex}{1.9} &    6.75
    &\raisebox{1.5ex}{1.8}   \\[-1ex] 
    129   & 1.46     & \raisebox{1.5ex}{2.0}&    1.71
    &\raisebox{1.5ex}{2.0}      \\[-1ex] 
    257   & 0.39     &\raisebox{1.5ex}{1.9} &    0.49
    &\raisebox{1.5ex}{1.8}     \\[-1ex] 
    513  & 9.75e{-2}     & \raisebox{1.5ex}{2.0}&    1.21e{-1}
    &\raisebox{1.5ex}{2.0}    \\[-1ex] 
    1025  & 3.34e{-2}     & \raisebox{1.5ex}{1.5}&   4.70e{-2}
    &\raisebox{1.5ex}{1.4}      \\[-1ex] 
    2049  & 7.50e{-3}  &\raisebox{1.5ex}{2.1}&      1.07e{-2}     &\raisebox{1.5ex}{2.1}  
  \end{tabular}
  \vspace{1.5ex}
  \caption{$E1$ and $E2$ for the one-soliton solution at time $T=480$.}
  \label{tab:1}
\end{table}
The computed solution in Figure~\ref{fig:1} looks quite well and the errors are also quite low and the convergence rate seems to converge to 2.

\subsection{A two-soliton solution}
\label{sec:twosol}
The velocity of a soliton depends on its amplitude; the higher the
amplitude, the faster it moves.  Thus a fast soliton will overtake a
slower soliton moving in the same direction. After the interaction,
the solitons will reappear with the same shape, but possibly with a
change in phase. As explicit formulas are available, they provide
excellent test cases for numerical methods.

Inspired by \cite{vasu:1998} we use the exact solution
\begin{equation}
  w(x,t) =- \frac{4 c_1 c_2 \left( c_1 \lambda^2_1 + c_2 \lambda^2_2
      + (c_1 + c_2)^3 c_1^{-1} c_2^{-1} (c_1 -c_2)^{-2}
    \right)}{\left( c_1 c_2 \lambda_1 \lambda_2 -(c_1 + c_2)^2 (c_1
      -c_2)^{-2} \right)^2 + (c_1 \lambda_1 + c_2 \lambda_2)^2},
  \label{eq:BO_twosol}
\end{equation}
where $\lambda_j = \lambda_j(x,t) = x -c_j t $, $j=1,2$, and
$c_1, c_2$ are arbitrary constants. Explicit periodic
two-soliton solutions exist, but the exact formula is
complicated. See, e.g., \cite{satsuma} for a more detailed
discussion. In what follows, we have computed the
two-soliton solution \eqref{eq:BO_twosol} of the unrestricted Cauchy
problem \eqref{eq:main_per}.  Moreover, we have used the initial value
$u_0(x) = w(x,-10)$ on an interval $(-30,30)$ as initial
values. and $c_1=2$, $c_1=1$. Since we compute on a finite line we
have used the periodic continuation, and used the scheme for the
periodic case. Since $w(\pm 30,t)$ remains very small in the time
interval $[-10,10]$ we believe that the computed solution is very close
to $w(t-10,x)$ for $t\le 20$, and we use $w(10,x)$ as a reference solution.

Computationally, this is a much harder problem than the one-soliton
solution due to the fact that in this case the errors stem from both
the approximation of the unrestricted initial-value problem by a
periodic one, and by the numerical approximation of the latter. In
Figure~\ref{fig:2} we show the exact solution and the approximate
solutions at $t=20$ computed using $257$ and $513$ grid points in the
interval $[-30,30]$.
\begin{figure}[h]
  \centering
  \includegraphics[width=.7\linewidth]{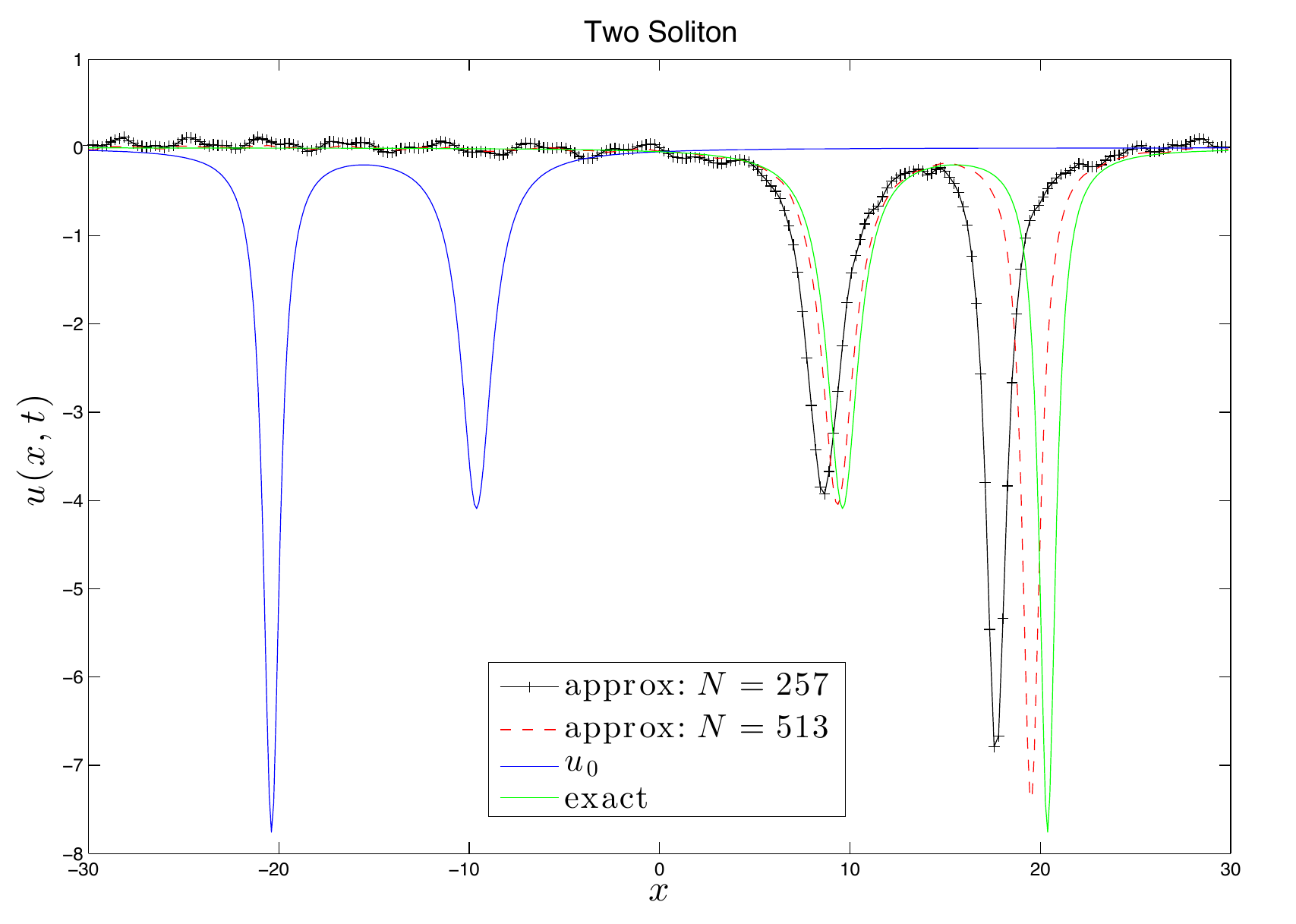}
  \caption{The numerical solution $u_\Dx(x,20)$ with initial data
    $w(x,-10)$.}
  \label{fig:2}
\end{figure}
As the Figure~\ref{fig:2} exhibits, the scheme performs well in the
sense that after the interaction, the two soliton have the same shapes
and velocities as before the interaction.  In Table~\ref{tab:2}, we
show the relative errors $E1$ and $E2$ as well as numerical rate of
convergence for the computed solutions. The large errors and the slow
convergence rate both indicate that we are not yet in asymptotic
regime.
 \begin{table}[h]
   \centering
   \begin{tabular}[h]{c|r r r r }
     $N$ &\multicolumn{1}{c}{$E1$} &\multicolumn{1}{c}{rate}
     &\multicolumn{1}{c}{$E2$} &\multicolumn{1}{c}{rate}  \\ 
     \hline\\[-2ex]
     65   & 125.12       & &
     113.07           &                                       \\[-1ex] 
     129   & 124.76     &\raisebox{1.5ex}{0.0} &    97.26
     &\raisebox{1.5ex}{0.2}   \\[-1ex] 
     257   & 108.74     & \raisebox{1.5ex}{0.2}&    93.99
     &\raisebox{1.5ex}{0.0}      \\[-1ex] 
     513   & 71.34     &\raisebox{1.5ex}{0.6} &    71.20
     &\raisebox{1.5ex}{0.4}     \\[-1ex] 
     1025  & 25.28     & \raisebox{1.5ex}{1.5}&    29.20
     &\raisebox{1.5ex}{1.3}    \\[-1ex] 
     2049  & 6.87     & \raisebox{1.5ex}{1.9}&    7.98
     &\raisebox{1.5ex}{1.9}      \\[-1ex] 
     4097  & 2.16  &\raisebox{1.5ex}{1.7}&      2.52     &\raisebox{1.5ex}{1.7}  
   \end{tabular}
   \vspace{1.5ex}
   \caption{$E1$ and $E2$ for the two-soliton solution at time $T=20$
     with initial data $w(-10,x)$.} 
   \label{tab:2}
 \end{table}



To sum up, our conservative scheme performs very well 
in practice and \emph{proven} to converge, whereas
to the best of our knowledge, there is no constructive proof of convergence,
for the other schemes associated to \eqref{eq:main} or \eqref{eq:main_per},
except \cite{vasu:1998} for some partial result (existence of solution has been assumed) 
in the periodic case \eqref{eq:main_per}.

\begin{thebibliography}{99}
%
\bibitem{ablowitz} M. ~J. Ablowitz and A. ~S. Fokas.  \newblock The
  inverse scattering transform for the Benjamin--Ono equation, a pivot
  for multidimensional problems.  \newblock \textit{Stud. Appl. Math.}
  68:1--10 (1983).
%
%
\bibitem{benjamin} T.~B. Benjamin.  \newblock Internal waves of
  permanent form in fluid of great depth.  \newblock
  \textit{J. Fluid. Mech.}  29:559--592 (1967).
%
%
\bibitem{BoydXu} J. P. Boyd and Z. Xu.  \newblock Comparison of three
  spectral methods for the Benjamin--Ono equation: Fourier
  pseudospectral, rational Christov functions and Gaussian radial
  basis functions.  \newblock \textit{Wave Motion} 48:702--706 (2011).
%

\bibitem{burq} N.~Burq and F.~Planchon.  \newblock On well-posedness
  for the Benjamin--Ono equation.  \newblock \textit{Math. Ann.}
  340:497--542 (2008).

%
%


\bibitem{DengMa:2009} Z. Deng, and H. Ma.  \newblock Optimal error estimates of the 
Fourier spectral method for a class of nonlocal, nonlinear dispersive wave equations.  
  \newblock \textit{Appl. Numer. Math.} 59:988--1010 (2009).


\bibitem{DuttaHoldenKoleyRisebro:2015} R. Dutta, H. Holden, U. Koley, and
  N.~H. Risebro.  \newblock Operator splitting schemes for the Benjamin--Ono equation.  
   \newblock
  \textit{Preprint}, 2015.
  
\bibitem{HoldenKoleyRisebro:2013} H. Holden, U. Koley, and
  N.~H. Risebro.  \newblock Convergence of a fully discrete finite
  difference scheme for the Korteweg--de Vries equation.  \newblock
  \textit{IMA J. Numer. Anal.}, doi:10.1093/imanum/dru040.
%
%
%
%
\bibitem{ionescu} A. ~Ionescu, and C. ~E. Kenig.  \newblock Global
  well posedness of the Benjamin--Ono equation in low regularity
  spaces.  \newblock \textit{J. Amer. Math. Soc.}, 20:753--798 (2007).


\bibitem{iorio} R. ~Iorio.  \newblock On the Cauchy problem for the
  Benjamin--Ono equation.  \newblock \textit{Comm. Part. Diff. Eq.},
  11:1031--1081 (1986).

%
%
\bibitem{king} F.~W.~King.  \newblock \textit{Hilbert
    Transforms. Vol. $1$}. \newblock Cambridge UP, Cambridge (2009).


 


%
%
  
%




\bibitem{molinet3a} L.~Molinet.  \newblock Global well-posedness in
  $L^2$ for the  periodic Benjamin--Ono equation.  \newblock
  \textit{Amer. J. Math.}  130:635--683 (2008).

\bibitem{ono} H.~Ono.  \newblock Algebraic solitary waves in
  stratified fluids.  \newblock \textit{J. Phy. Soc. Japan}
  39(4):1082--1091 (1975).

\bibitem{Pelloni:2000} B. Pelloni and V. A. Dougalis.  \newblock
  Numerical solution of some nonlocal, nonlinear dispersive wave
  equations.  \newblock \textit{J. Nonlinear. Sci.} 10:1--22 (2000).


\bibitem{Pelloni:2001} B. Pelloni and V. A. Dougalis.  \newblock
Error estimate for a fully discrete spectral scheme for a class of
nonlinear, nonlocal dispersive wave equations.
\newblock \textit{Appl. Numer. Math.} 37:95--107 (2001).



\bibitem{ponce} G.~Ponce.  \newblock On the global well posedness of
  the Benjamin--Ono equation.  \newblock \textit{Diff. Int. Eq.}
  4:527--542 1991).



\bibitem{satsuma} J.~Satsuma, and Y.~Ishimori.  \newblock Periodic
  wave and rational soliton solutions of the Benjamin--Ono equation.
  \newblock \textit{J. Phys. Soc. Japan.} 46:681--687 (1979).




\bibitem{Sjoberg:1970} A. Sj\"oberg.  \newblock On the Korteweg--de
  Vries equation: Existence and uniqueness.  \newblock
  \textit{J. Math. Anal. Appl.} 29:569--579 (1970).



%



\bibitem{Tao:2004} T. Tao.  \newblock Global well-posedness of the
  Benjamin--Ono equation in $H^1(\R)$.  \newblock \textit{
    J. Hyp. Diff. Equations} 1(1):27--49 (2004).

%


\bibitem{vasu:1998} V. Thomee and A. S. Vasudeva Murthy.  \newblock A
  numerical method for the Benjamin--Ono equation.  \newblock
  \textit{BIT} 38(3):597--611 (1998).



\end{thebibliography}
\end{document}